\newcommand{\cA}{\mathcal{A}}
\newcommand{\cC}{\mathcal{C}}
\newcommand{\cD}{\mathcal{D}}
\newcommand{\cR}{\mathcal{R}}
\newcommand{\eps}{\varepsilon}
\newcommand{\ve}{\varepsilon}
\newcommand{\wt}{\widetilde}
\DeclareMathOperator{\erfi}{erfi}
\DeclareMathOperator{\erf}{erf}
\theoremstyle{plain}
\newtheorem{theorem}{Theorem}[section]
\newtheorem{proposition}[theorem]{Proposition}
\theoremstyle{definition}
\newtheorem{remark}[theorem]{Remark}
\newtheorem{defi}[theorem]{Definition}
\newtheorem{example}[theorem]{Example}
\newtheorem{problem}[theorem]{Problem}
\renewenvironment{proof}[1][] {\noindent {\bf Proof#1.} }{\hspace*{\fill}$\square$\medskip\par}
\newcommand{\N}{{\mathbb N}}
\newcommand{\Z}{{\mathbb Z}}
\newcommand{\R}{{\mathbb R}}
\newcommand{\E}{{\mathbb E}}
\newcommand{\bs}{{\mathbf s}}
\renewcommand{\P}{{\mathbb P}}
\newcommand{\F}{{\mathcal F}}
\renewcommand{\P}{{\mathbb P}}
\newcommand{\ol}{\overline}
 \numberwithin{equation}{section}
\author{Krzysztof Burdzy
\and Michael Scheutzow}
\address{KB: Department of Mathematics, Box 354350,
University of Washington, Seattle, WA 98195, USA}
\email{burdzy@math.washington.edu}
\address{MS: Institut f\"ur Mathematik, MA 7-5, Fakult\"at II, 
Technische Universit\"at Berlin, 
Stra\ss e des 17.~Juni 136, 10623 Berlin, Germany }
\email{ms@math.tu-berlin.de}
 \thanks{
K.~Burdzy's research was supported in part by NSF Grant DMS-1206276. The authors are grateful for the hospitality and support of Centre Interfacultaire Bernoulli, Ecole Polytechnique F\'ed\'erale de Lausanne,
where part of this research was done.
}
\title{Forward Brownian Motion}
\date{}
\begin{document}

\begin{abstract}\noindent
We consider processes which have the distribution of standard Brownian motion (in the forward direction of time) starting from random points on the trajectory which accumulate at $-\infty$. We show that these processes 
do not have to have the distribution of standard Brownian motion in the backward direction of time, no matter  which random time we take as the origin. We study the maximum and minimum rates of growth for these processes in the backward direction. We also address the question of which extra assumptions make one of these processes a two-sided Brownian motion.
\end{abstract}

\maketitle

\section{Introduction}

This article is devoted to {\em forward Brownian motions}, i.e., processes defined on the whole real line which appear to be Brownian motions when observed from random points in space-time in the forward time direction.  
More precisely, we will say that $\{X_t,\, t \in \R\}$ is a {\em forward Brownian motion} (FBM) if there exists a sequence $\{S_n,n \leq 0\}$ of 
random times such that $\lim_{n \to -\infty} S_n=-\infty$, a.s., and for every $n$, the process 
$\{X_{S_n+t}-X_{S_n},\,t \ge 0\}$ is standard Brownian motion on $[0,\infty)$. 

A simple example of FBM is 
two-sided Brownian motion, i.e., the process
$\{X_t,\, t \in \R\}$ such that 
$\{X_{t},\,t \ge 0\}$ and  $\{X_{-t},\,t \ge 0\}$ are independent standard Brownian motions. 

We will address several families of questions. It is natural to start with the very general question of whether there are any forward Brownian motions that are significantly different from 
two-sided Brownian motion? The question is somewhat vague but we believe that the answer is quite clear. We will exhibit a number of FBM's that are very different from two-sided Brownian motion by any measure.

We will say that
$\{X_t,\, t \in \R\}$ is  {\em backward Brownian motion}  if $\{X_{-t},\, t \in \R\}$ is FBM. If a process is both a forward Brownian motion and a backward Brownian motion, is it necessarily two-sided Brownian motion (or a very similar process)? The answer is no --- we will present an example to this effect.

It is easy to see that some FBM's can be constructed by concatenating pieces of independent standard Brownian motions. We will show that the family of FBM's  constructed in this way is very rich. One could hope that every FBM may be represented this way --- that would provide a convenient technical tool. Somewhat disappointingly, this turns out to be false. This leaves open the question of characterizing all FBM's. This problem is vague but we can indicate what we mean by invoking well known ``characterizations'' of some families of stochastic processes. L\'evy processes are characterized by the L\'evy-Khinchin exponent; Gaussian processes are characterized by the mean and covariance functions; one-dimensional diffusions are characterized by the scale function and speed measure. So far, we have not found a similar characterization for FBM's.

We will start our rigorous study of FBM's by presenting several results on their path behavior. We will show that FBM trajectories can be extremely different from those of two-sided Brownian motion.

The paper has two disparate sources of inspiration. On the technical side, FBM's arise naturally in the study of skew-Brownian motion (we will be more specific below). On the philosophical 
and scientific side, one may ask what can be said about a stochastic process representing a natural phenomenon which can be observed effectively only in one direction of time. 
If it appears to be Brownian motion, does it necessarily imply that the trajectories of this process have to be Brownian-like in the opposite direction of time? 
The motivation for this question is provided by processes that occur on a scale that is borderline feasible for effective observations, such as the evolution of species.

Our paper is related to a number of models and problems that appeared in literature. ``Extended chains'' were discussed in \cite[Chap.~10]{KSK} (see especially Definition 10-5). A duality problem for L\'evy processes was studied in \cite{BS}; our Example \ref{d17.1} is a special case of that model. Forward Brownian motion is also related to Brownian interlacements that were presented in \cite{SZ}. 

The rest of the paper is organized as follows. Section \ref{sec:def} presents basic definitions and examples. The minimum and maximum rates of growth of FBM trajectories in the 
backward direction are studied in Sections \ref{maxrange} and \ref{minrange}. We analyze the question of which extra assumptions make a decomposable FBM a 2-sided Brownian motion 
in Section \ref{sec:2BM}. We show that a process that is simultaneously a forward Brownian motion and a backward Brownian motion is not necessarily a 2-sided Brownian motion 
in Section \ref{sec:FBMBBM}. Finally, we list some open problems in Section \ref{sec:open}.

\section{Definitions and basic examples}\label{sec:def}

The sets of real numbers, non-negative natural numbers, strictly positive natural numbers and integers will be denoted $\R, \N_0, \N$ and $\Z$, respectively.

Unless stated otherwise, the terms {\em standard Brownian motion} and {\em Brownian motion} will be treated as synonyms and we will assume that these processes start at 0 at time 0.

\subsection{Definitions}

\begin{defi}\label{d17.4}
We will say that $\{X_t,\, t \in \R\}$ is a {\em forward Brownian motion} (FBM) if there exists a sequence $\{S_n,\,-n \in \N_0\}$ of 
random times such that $\lim_{n \to -\infty} S_n=-\infty$, a.s., and for every $n$, the process 
$\{X_{S_n+t}-X_{S_n},\,t \ge 0\}$ is  standard Brownian motion on $[0,\infty)$. We will say that
$\{X_t,\, t \in \R\}$ is  {\em backward Brownian motion} (BBM) if $\{X_{-t},\, t \in \R\}$ is FBM. Further, we 
call $\{X_t,\, t \in \R\}$ {\em two-sided Brownian motion} (2BM) if there exists a random time $S$ such that 
$\{X_{S+t}-X_S,\,t \ge 0\}$ and  $\{X_{S-t}-X_S,\,t \ge 0\}$ are independent standard Brownian motions. 
If we can take $S\equiv 0$ in the last definition then the distribution of $X$ will be denoted 2BM(0).
\end{defi}

Note that the formal definition of two-sided Brownian motion given above is less restrictive than the informal definition given in the introduction.

If $X$ is 2BM then it is FBM. To see this, let $S_n = S+n$, $-n\in \N_0$, in Definition \ref{d17.4}. Similarly, if $X$ is 2BM then it is BBM.

We will describe a general method of constructing forward Brownian motions.
\begin{defi}\label{d17.3}
For each $k\in \Z  $, let
$\{B^k_t, t\geq 0\}$ be a Brownian motion with respect to some normal filtration $\{\F_t^k,t \ge 0\}$ and let $T_k$ be a stopping time with 
respect to $\F^k$. Assume that $(T_k, \{B^k_t, t\in[0,T_k]\})$, $k\in \Z $ are independent and, a.s., $0 \leq T_k < \infty$, for $k\in \Z $, 
$\sum_{k=1}^\infty T_k = \infty$ and $\sum_{k=-\infty}^{-1} T_k = \infty$.
Let $S_0=0$, and note that the conditions $S_{k+1} - S_{k} = T_k$, $k\in\Z$, define uniquely $S_k$ for all $k\in \Z$.
Let $X$ be the unique continuous process such that $X_0 =0$ and
$X_{S_k +t} - X_{S_k} = B^{k}_{t}$ for $t\in [0, T_{k})$, $k \in\Z$.
If a process $X'$ is such that for some random time $U$, the process $\{X_t := X'_{U+t}- X'_U,t\in\R\}$ can be represented as above, then we will call $X'$ \emph{decomposable}.
If   
$(T_k, \{B^k_t, t\in[0,T_k]\})$, $k\in \Z $, 
are i.i.d.~then $X'$ will be called \emph{strongly decomposable}.
\end{defi}

A decomposable process is the concatenation of a countable number of independent (but not necessarily identically distributed) pieces of Brownian trajectories. It follows from the strong Markov property that $\{X_{S_n+t} - X_{S_n}, t\geq 0\}$ is standard Brownian motion for every $n\in \Z$. Hence, a decomposable process is FBM. 

\begin{remark}\label{d18.5}
Recall the condition $\lim_{n \to -\infty} S_n=-\infty$ that appears in Definition \ref{d17.4}. The following list contains this condition and its alternatives (all conditions are supposed to hold a.s.).
\begin{enumerate}[(i)]
\item $\lim_{n \to -\infty} S_n=-\infty$.
\item $\lim_{n \to -\infty} S_n=-\infty$ and $S_n \leq S_{n+1}$ for all $-n\in \N$.
\item $\lim_{n \to -\infty} S_n=-\infty$ and $S_{n+1}$ is a stopping time relative to the filtration generated by $\{X_{S_n+t}-X_{S_n},\,t \ge 0\}$, for all $-n\in\N$.
\item $\liminf_{n \to -\infty} S_n=-\infty$.
\end{enumerate}
Clearly a process satisfying (iii) satisfies (ii), and (ii) is stronger than (i). 
On the other hand, (iv) is weaker than (i).
It is easy to see that (iii) is equivalent to the process $X$ being decomposable.

It is natural to ask if all conditions are in fact equivalent. Proposition \ref{d22.1}~(ii) shows that not all FBM's are decomposable, so (iii) is not equivalent to (i). 

The equivalence of (i) and (ii) would be proved if we could show that if $\{X_t, \,t \in \R\}$ is a process and $S$ and $T$ are random times such that both $\{X_{S+t}-X_S, t\geq 0\}$ and 
$\{X_{T+t}-X_T, t\ge 0\}$ are Brownian motions, then $\{X_{(S \wedge T)+t}-X_{S\wedge T}, \, t \ge 0\}$ 
is Brownian motion. This is not true---not even if we assume that $X$ is two sided Brownian motion.
As an example, let $X$ be two sided Brownian motion with $X_0=0$, $S \equiv 0$ and 
$\Omega_0:=\{\omega: X_1(\omega)>0\}$. Let 
$N=\sup \{n \in \N: X_{-k+1}-X_{-k} >0 \mbox{ for all } k \in \{1,...,n\}\}$ with the convention $\sup \emptyset = 0$. 
Define $T$ to be 1 on $\Omega_0$ and $-N$ otherwise. It is easy to check that both 
$X_{S+t}-X_S$ and $X_{T+t}-X_T$ are Brownian motions but $X_{(S \wedge T)+t}-X_{S\wedge T}$ is not (nor is 
$X_{(S \vee T)+t}-X_{S\vee T}$).
\end{remark}

\begin{remark}\label{d18.6}
It is not true that for every FBM $X$ there exists a random time $T$ such that $\{X_{T+t}-X_T, t\geq 0\}$ and $\{X_{T-t}-X_T, t\geq 0\}$ are independent and $\{X_{T+t}-X_T, t\geq 0\}$ is standard Brownian motion. A counterexample is given in Proposition \ref{d22.1}~(i).
See Example \ref{d18.11} for a weaker, but much easier to prove, claim.
\end{remark}

Sometimes it will be convenient to work with the discrete version of FBM, i.e., forward random walk defined as follows. 

\begin{defi}\label{d18.7}
We will say that an integer valued process $\{Z_n,\, n \in \Z\}$ is {\em forward random walk} (FRW) if there exists a sequence $\{S_n,\,-n \in \N_0\}$ of integer valued
random times such that $\lim_{n \to -\infty} S_n=-\infty$, a.s., and for every $n$, the process 
$\{Z_{S_n+k}-Z_{S_n},\,k \in \N_0\}$ is simple symmetric random walk. We will say that
$\{Z_n,\, n \in \Z\}$ is  {\em backward random walk} (BRW) if $\{Z_{-n},\, n \in \Z\}$ is FRW. We 
call $\{Z_n,\, n \in \Z\}$ {\em two-sided random walk} (2RW) if there exists a random time $S$ such that 
$\{Z_{S+k}-Z_S,\,k\in \N_0\}$ and $\{Z_{S-k}-Z_S,\,k\in \N_0\}$ are independent simple symmetric random walks. 
If we can take $S\equiv 0$ in the last definition then the distribution of $Z$ will be denoted 2RW(0).
\end{defi}

Decomposable and strongly decomposable FRW are defined in a way analogous to that in Definition \ref{d17.3}.

\begin{remark}\label{d18.10}
We will now discuss the relationship between forward Brownian motion and forward random walk.

(i) Suppose that $\{Z_k, k\in\Z\}$ is an integer valued process with the property that $|Z_{k+1} - Z_k| = 1$ for all $k\in \Z$, a.s. We do not assume that the distribution of $Z$ is that of random walk but our construction will be easiest to understand if one keeps in mind a particular example, namely, that of 2RW(0).

Let $\{U_t, t\in [0, \tau_U]\}$ be one dimensional Brownian motion starting at 0, conditioned to stay positive and stopped at the hitting time of 1. In other words, $U$ is Doob's $h$-process in $[0,1]$, starting from 0 and conditioned to converge to 1. Yet another way to think about $U$ is that it is 3-dimensional Bessel process stopped at the hitting time of 1. The process $\{U_{\tau_U } - U_{\tau_U-t},  t\in [0, \tau_U]\}$ has the same distribution as $\{U_t, t\in [0, \tau_U]\}$. See \cite{Will74} for a justification of these claims.

Suppose that $\{B_t, t\geq 0\}$ is standard Brownian motion starting from 0 and let $\tau_B = \inf\{t\geq 0: |B_t| = 1\}$ and $\sigma_B = \sup\{t\leq \tau_B: B_t = 0\}$. Excursion theory easily shows that the processes 
$\{B_t, t\in[0, \sigma_B]\}$ and 
$\{B_{\sigma_B-t}, t\in[0, \sigma_B]\}$
have the same distribution.

Let $\{U^k_t, t\in [0, \tau^k_U]\}$, $k\in\Z$, be i.i.d.{} copies of $\{U_t, t\in [0, \tau_U]\}$ and let $\{B^k_t, t\in[0, \sigma^k_B]\}$, $k\in\Z$, be i.i.d.{} copies of $\{B_t, t\in[0, \sigma_B]\}$ (also independent of $U^k$'s). We will write $U^k_t = U^k(t)$ for typesetting reasons, and similarly for other processes.
Let $M_0 =0$ and define $M_j$ for $j\in\Z$ by 
$M_{j+1} - M_j = \sigma^j_B + \tau^j_U$. For $t\in\R$, a.s., there exists a unique $j\in\Z$ such that 
$M_j \leq t < M_{j+1}$. 
For such $t$ and $j$, let
\begin{align*}
\begin{cases}
X_t = Z_j + B^j(t- M_j) & \text{for  } t < M_j +\sigma^{j}_B,\\
X_t = Z_j + (Z_{j+1}-Z_j)U^j(t- M_j- \sigma^{j}_B)
& \text{otherwise}.
\end{cases}
\end{align*}

It is routine to check that if, for a random integer valued time $S$, 
$\{Z_{S+k}-Z_{S},\,k \in \N_0\}$ is simple symmetric random walk then 
$\{X_{S+t}-X_{S},\,t\geq 0\}$ is Brownian motion. Hence, if $Z$ is FRW then $X$ is FBM.
Moreover, time reversibility of the processes used in the construction explained above implies that if, for a random integer valued time $S$, 
$\{Z_{S-k}-Z_{S},\,k \in \N_0\}$ is simple symmetric random walk then 
$\{X(S+\sigma^S_B-t)-X(S+\sigma^S_B),\,t\geq 0\}$
is Brownian motion. It follows that 
if $Z$ is BRW then $X$ is BBM. 

\smallskip

(ii) We will present a relationship between 2BM and 2RW that goes in the opposite direction, i.e., we will define 2RW starting with 2BM.

Suppose that $X$ is 2BM and $S$ is such that 
$\{X_{S+t}-X_S,\,t \ge 0\}$ and  $\{X_{S-t}-X_S,\,t \ge 0\}$ are independent standard Brownian motions. 

Let $U_0:=S$ 
and for $k\geq 1$, let $U_k := \inf\{t\geq U_{k-1}: |X_t - X_{U_{k-1}}| =1\}$. For $k\in \Z$, $k < 0$, we let 
$U_k := \sup\{t\leq U_{k+1}: |X_t - X_{U_{k+1}}| =1\}$. Let $Z_k = X_{U_k}-X_S$ for $k\in \Z$.

It follows from the strong Markov property of 2BM that 
$Z$ is 2RW(0).
\end{remark}

\subsection{Basic examples}
We start with elementary examples of FBM's.

\begin{example} \label{ex1}
(i) Suppose that $X$ is FBM and recall the random times $S_n$ in Definition \ref{d17.4}. Suppose that for every $n\in \N_0$, there exists a (non-random) real number $s_n$ such that $S_n = s_n$, a.s. Then $X$ is 2BM. To see this, note that since the $S_n$'s are deterministic, the finite dimensional distributions of $\{X_t - X_0, t\in \R\}$ are Gaussian with mean equal to 0 and the same covariance function as for Brownian motion.

(ii) 
A slightly more general example than that in part (i) is the following. We will use the notation of Definition \ref{d17.3}. Suppose that there exists a sequence of (non-random) real numbers $t_k>0$ such that 
$\sum_{k=1}^\infty t_k = \infty$ and $\sum_{k=-\infty}^{-1} t_k = \infty$, and 
$T_k \equiv t_k$ for all $k$. If $X$ is a decomposable FBM corresponding to the $T_k$'s then $X$ is 2BM.
\end{example}

The following example is the starting point of our project, in a sense.
We will construct an FBM which is not two-sided Brownian motion. We will also introduce an idea that will be the basis of a number of our arguments. The example is a special case of duality relationship studied in \cite{BS}.

\begin{example}\label{d17.1}
Recall the notation from Definition  \ref{d17.3}. Suppose that $X$ is strongly decomposable, $X_0=S_0=0$ and $T_k = \inf\{t\geq 0: B^k_t = -1\}$ for all $k$. Note that $X_t \geq -k$ for all $t\leq S_k$, $k\in \Z$, a.s. It follows that $\lim_{t\to -\infty} X_t = \infty$, a.s. Hence, $X_t$ is not backward Brownian motion and, therefore, it is not two-sided Brownian motion. 

We will show that $\{X_{-t}, t\geq 0\}$ is 3-dimensional Bessel process. By the strong Markov property, for $k<0$, the process $\{X_{S_k +t } - X_{S_k}, t\in[0, -S_k]\}$ is Brownian motion stopped at the first hitting time of $k$.
By \cite[Thm.~3.4]{Will74}, the time reversed process $\{X_{-t}, t\in [0, -S_k]\}$ is 3-dimensional Bessel process stopped at the last exit time from $-k$. Since $k$ is arbitrary and $S_k\to -\infty$, a.s., we conclude that 
$\{X_{-t}, t\in [0, \infty)\}$ is 3-dimensional Bessel process.
\end{example}

The following example provided the original motivation
for this project. In a sense, it is a generalization of Example \ref{d17.1}.

\begin{example}\label{d17.2}
Given a standard Brownian motion $B$ and $- 1 \leq \beta \leq 1$, the equation 
\begin{align}\label{d18.1}
Z_t =  B_t + \beta L_t^Z \, , \quad t \geq 0, 
\end{align}
has a unique strong solution (see \cite{HarrShepp,Lejay}). Here $L^Z$ is the symmetric local time of $Z$ at $0$. 
The process $Z$ is called skew Brownian motion.

Let $T = \inf\{t\geq 0: L_t^Z = 1\}$ and let $\{(B^k, T_k)\}_{k\in\Z}$ be an i.i.d.{} family with elements distributed as $(B,T)$. We now define an FBM $X$ as a strongly decomposable process based on 
$\{(B^k, T_k)\}_{k\in\Z}$, as in Definition \ref{d17.3}.

We can write as in \eqref{d18.1},
\begin{align*}
Z^k_t =  B^k_t + \beta L_t^k \, , \quad t \geq 0. 
\end{align*}
Let $L^X_t = L^k_t + k\beta$ for $t\in [S_k, S_{k+1}]$, $k\in \Z$. 
The analysis of the excursion process of $Z^k$ above 0 shows that 
the process $\{Y_t := X_{-t} + 2\beta L^X_{-t}, t\geq 0\}$ 
is Brownian motion.
The distribution of $L^Z_t$ is the same as that of $\max_{0\leq \leq t} B_t$ (see \cite{HarrShepp}) so $\E L^Z_t = \sqrt{2t/\pi} $. Hence, for $t\geq0$,
\begin{align}\label{d18.3}
\E X_{-t} = \E Y_t -\E(2\beta L^X_{-t}) = 2\beta \sqrt{2t/\pi}.
\end{align}
This shows that for different values of the parameter $\beta$, the distributions of FBM's $X$ are different. Moreover, \eqref{d18.1} and \eqref{d18.3} show that $X$ is two sided Brownian motion if and only if $\beta= 0$.  

For $\beta = 1$, $X_t$ is the same as in Example \ref{d17.1} 
because, in this case, $Z$ is reflected Brownian motion and $L^Z_t = \min_{0\leq s\leq t} B_s$ (see \cite{HarrShepp}).

If we let $S_u = \inf\{t: L^X_t = u\}$ for $u\in \R$ then for integer $u$, this definition of $S_u$ agrees with the definition of $S_k$ given in Definition \ref{d17.3}. It is easy to see that $S_u < S_v$ for $u<v$ and 
$\{X_{S_u+t}-X_{S_u},\,t \ge 0\}$ is Brownian motion
for every $u\in \R$. In other words, the process $X$ is Brownian motion as viewed from a family of random points $(S_u, X_{S_u})$ in space time; the cardinality of this family is the same as that of $\R$. We do not believe that such a family can be constructed for every FBM. For example, we doubt that it can be constructed for FBM presented in Section \ref{sec:FBMBBM}.
\end{example}

Since Example \ref{d17.1} is the ``extreme'' case of Example \ref{d17.2}, one 
may wonder whether properties of trajectories of $X_t$ in Example \ref{d17.1},
when $t\to -\infty$, display ``extreme'' possible behavior for trajectories
of any FBM. In other words, are path properties of 3-dimensional Bessel process
extreme among path properties of all FBM's? The answer is negative in every conceivable sense---see
Sections \ref{maxrange}-\ref{minrange}. 

\begin{example}\label{d18.11}
We will show that if $Z$ is a strongly decomposable FRW and $T$ is a random time such that $\{Z_{T+n},\,n\in \N_0\}$ 
is a simple symmetric random walk then this does not imply that the increments of 
$Z$ to the right and to the left of $T$ are independent. 
Let $Y$ be simple symmetric random walk starting from $Y_0=0$ and let
$S=\inf\{n \in \{2,3,...\}:
Y_n-Y_{n-1}=Y_{n-1}-Y_{n-2}\}$. Let $Z$ be strongly decomposable FRW constructed as a concatenation of independent copies of $(Y,S)$.  
It is easy to see that if $T \equiv -1$ then $\{Z_{T+n},\,n \in \N_0\}$ is a simple symmetric random walk and that the increments 
of $Z$ before and after time $T$ are not independent. 
Specifically, $Z_T-Z_{T-1}$ and $Z_T-Z_{T+1}$ are fully correlated.
We note parenthetically that $\{X_{-1-n}, \,n \in \N_0\}$ is 
not a simple symmetric random walk in this example.
If $X$ is the FBM constructed from $Z$ as in Remark \ref{d18.10} and $T\equiv -1$ then $\{X_{T+t} - X_T, t\geq 0\}$ and $\{X_{T-t} - X_T, t\geq 0\}$ are not independent because if the first process hits 1 before hitting $-1$ then the opposite is true of the second process.
This is a much weaker claim than that in Remark \ref{d18.6}.
\end{example}

\section{Maximum asymptotic range}\label{maxrange}

The main result of this section, Theorem \ref{d18.12}, states that the $\limsup$ of FBM in the backward direction can be arbitrarily 
large. By symmetry, the $\liminf$ can be arbitrarily small. Moreover,
both assertions can be true simultaneously. As a warm up,
we present two simple results that have short proofs.

\begin{proposition}\label{LIL}
Let $X$ be a decomposable FBM with associated sequence $S_k,\,k \in \Z$, in the notation of Definition \ref{d17.3}. Then we have
\begin{equation}\label{LILS}
\limsup_{n \to -\infty} \frac{X_{S_n}}{\sqrt{2 |S_n| \log \log |S_n|}}\le 1 \mbox{ a.s.}
\end{equation}
By symmetry, an analogous inequality holds for $\liminf$.
\end{proposition}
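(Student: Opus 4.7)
The plan is to reduce the statement to the classical Khintchine law of the iterated logarithm for Brownian motion by exhibiting a single standard Brownian motion $\tilde B$ on $[0,\infty)$ such that $X_{S_n}=-\tilde B_{|S_n|}$ for every integer $n\le 0$. The key observation is that, although the ``forward'' Brownian motions $W^{(n)}_t=X_{S_n+t}-X_{S_n}$ depend on $n$, the independent building blocks $B^{-1},B^{-2},\ldots$ furnished by the decomposability hypothesis can be concatenated \emph{in this reverse order}, from the past toward the origin, to produce one global Brownian motion that encodes precisely the backward marginals $X_{S_n}$, $n\le 0$.

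Concretely, I would first set $R_0=0$ and $R_i=T_{-1}+T_{-2}+\cdots+T_{-i}$ for $i\ge 1$, so that $R_i=|S_{-i}|$, and then define a continuous process $\tilde B$ on $[0,\infty)$ by $\tilde B_0=0$ together with
\[
\tilde B_{R_{i-1}+t}-\tilde B_{R_{i-1}}=B^{-i}_t,\qquad t\in[0,T_{-i}],\ i\ge 1.
\]
The next step is to verify that $\tilde B$ is genuinely a standard Brownian motion on $[0,\infty)$. Since the families $(T_k,\{B^k_t,\,t\in[0,T_k]\})$ are independent across $k\in\Z$ by Definition \ref{d17.3} and each $T_{-i}$ is a stopping time for $B^{-i}$, a short induction based on the strong Markov property shows that $\tilde B$ is a Brownian motion on each $[0,R_i]$; since $\sum_{i=1}^\infty T_{-i}=\infty$ almost surely by the standing assumption, $\tilde B$ extends to a standard Brownian motion on all of $[0,\infty)$. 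Telescoping the identities $X_{S_{-i+1}}-X_{S_{-i}}=B^{-i}_{T_{-i}}$, which follow from Definition \ref{d17.3} together with continuity of $X$, then yields
\[
X_{S_{-k}}=-\sum_{i=1}^k B^{-i}_{T_{-i}}=-\tilde B_{R_k}=-\tilde B_{|S_{-k}|},\qquad k\ge 1.
\]

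With this identification the conclusion is immediate: since $-\tilde B$ is itself a standard Brownian motion, the classical LIL gives $\limsup_{t\to\infty}(-\tilde B_t)/\sqrt{2t\log\log t}=1$ a.s., and because $|S_{-k}|\to\infty$ a.s., evaluating along the (random) sequence $t=|S_{-k}|$ produces \eqref{LILS}. The only step requiring any genuine care is the verification that the concatenated process $\tilde B$ is a Brownian motion, since one must set up the joint filtration generated by the $B^{-i}$ together with their stopping times $T_{-i}$ and then apply the strong Markov property inductively; once this is done the LIL does the rest of the work.
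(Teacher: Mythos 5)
Your proposal is correct and is essentially the paper's own argument: the authors likewise concatenate the pieces of $X$ over $[S_{n-1},S_n]$ in reverse order into a single Brownian motion $\ol X$ on $[0,\infty)$ satisfying $\ol X_{-S_n}=-X_{S_n}$, and then invoke the classical law of the iterated logarithm along the sequence $|S_n|\to\infty$. Your $\tilde B$ is exactly their $\ol X$, with a slightly more explicit verification of the strong Markov concatenation step.
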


\begin{proof}
Define
$$
\ol X_{t}:= X_{S_n+S_{n-1}+t}-X_{S_n}-X_{S_{n-1}} \qquad \mbox{ if } S_{n-1} \le -t \le S_n,\, -n \in \N_0,
$$
and observe that $\{\ol X_t, \, t\ge 0\}$ is standard Brownian motion because, for each $-n\in \N_0$, we shifted the graph of $X$ between $S_{n-1}$ 
and $S_n$ by $(-S_{n-1}-S_n, -X_{S_{n}}-X_{S_{n-1}})$. Since $\ol X_{-S_n}=-X_{S_n}$ for all $-n \in \N_0$,
\eqref{LILS} follows from the usual law of the iterated logarithm for Brownian motion.
\end{proof}

\begin{proposition}\label{d22.5}
Let $X$ be FBM. Then we have
$$
\liminf_{t \to -\infty} \frac{X_t}{\sqrt{2|t|\log \log |t|}} \le 1 \mbox{ a.s.}
$$
\end{proposition}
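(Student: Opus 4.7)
My plan is to reduce the continuous-time $\liminf$ to a $\liminf$ along the random sequence $\{S_n\}$. Since $S_n\to -\infty$ almost surely,
$$\liminf_{t\to-\infty}\frac{X_t}{\sqrt{2|t|\log\log|t|}}\le \liminf_{n\to-\infty}\frac{X_{S_n}}{\sqrt{2|S_n|\log\log|S_n|}}\quad\text{a.s.},$$
so it is enough to bound the right-hand side by $1$. For each $n$ let $B^n_t := X_{S_n+t}-X_{S_n}$, which is a standard Brownian motion by the FBM hypothesis. Once $S_n<0$ (which happens for all but finitely many $n$), the identity $B^n_{|S_n|}=X_0-X_{S_n}$, together with the elementary fact that $X_0/\sqrt{2|S_n|\log\log|S_n|}\to 0$, reduces the task to showing that for every $\varepsilon>0$, $B^n_{|S_n|}\ge -(1+\varepsilon)\sqrt{2|S_n|\log\log|S_n|}$ holds for infinitely many $n$ almost surely.

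Fix $\varepsilon>0$. Applying the lower LIL to each $B^n$, the random time
$$T^n := \inf\bigl\{s\ge 3:\ B^n_t\ge -(1+\varepsilon)\sqrt{2t\log\log t}\text{ for all }t\ge s\bigr\}$$
is a.s.\ finite, and since every $B^n$ has the law of Brownian motion, the distribution of $T^n$ does not depend on $n$; write $p(M):=\P(T^n>M)$, so $p(M)\to 0$ as $M\to\infty$. Let $H_n := \{|S_n|\ge T^n\}$. On $H_n$ the bound $B^n_{|S_n|}\ge -(1+\varepsilon)\sqrt{2|S_n|\log\log|S_n|}$ holds by the very definition of $T^n$, so it remains to prove $\P(H_n\text{ i.o.})=1$. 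The main obstacle is that the FBM definition guarantees no independence whatsoever between $B^n$ and $S_n$ (indeed the joint law of $(B^n,S_n)$ is not specified), so one cannot simply condition on $|S_n|$ and apply a Gaussian tail bound; I can use only the marginal information that each $B^n$ is a Brownian motion.

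To circumvent the absence of independence I use the crude inequality
$$\P(H_n^c)=\P(|S_n|<T^n)\le \P(T^n>M)+\P(|S_n|\le M) = p(M)+\P(|S_n|\le M),\quad M>0.$$
For each fixed $M$ the second term tends to $0$ as $n\to-\infty$ because $|S_n|\to\infty$ in probability; letting $M\to\infty$ then yields $\P(H_n^c)\to 0$. The event that $H_n$ fails for every $n\le -n_0$ is contained in $H_n^c$ for each such single $n$, so its probability is at most $\inf_{n\le -n_0}\P(H_n^c)=0$; a countable union over $n_0\in\N$ is still null, so $H_n$ occurs for infinitely many $n$ almost surely. Intersecting the resulting full-measure events corresponding to $\varepsilon=1/k$, $k\in\N$, gives $\liminf_{n\to-\infty}X_{S_n}/\sqrt{2|S_n|\log\log|S_n|}\le 1$ a.s., completing the argument.
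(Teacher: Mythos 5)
Your argument is correct and follows essentially the same route as the paper's: reduce to the sequence $\{S_n\}$, use the lower LIL for each Brownian motion $B^n$ together with $|S_n|\to\infty$ in probability to show the bad-event probabilities tend to $0$, and conclude that the good events occur infinitely often. You spell out the decoupling $\P(H_n^c)\le p(M)+\P(|S_n|\le M)$ that the paper compresses into ``by the LIL and the fact that $S_n\to-\infty$,'' and you replace the paper's subsequence-plus-Borel--Cantelli step with the equally valid observation that $\P\bigl(\bigcap_{n\le -n_0}H_n^c\bigr)\le\inf_{n\le -n_0}\P(H_n^c)=0$; these are cosmetic differences only.
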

\begin{proof}
Note the it will suffice to prove the claim for $X_t - X_0$ in place of $X_t$. Let $S_n$ be as in Definition \ref{d17.4}.
Fix an arbitrarily small $\ve >0$ and let 
$$
p_n :=\P(X_{S_n}-X_0\ge (1+\ve)\sqrt{2|S_n|\log \log |S_n|}).
$$
By the LIL and the fact that $S_n \to -\infty$ it follows that $p_n \to 0$. Passing to a subsequence, if necessary, for which the sum of $p_n$'s is finite, 
we see that, by the Borel-Cantelli Lemma, we have almost surely,
$$
X_{S_n}-X_0< (1+\ve)\sqrt{2|S_n|\log \log |S_n|}
$$
for infinitely many $n$, so the proposition follows.
\end{proof}

\begin{theorem}\label{d18.12}
For each increasing function $f:[0,\infty) \to [0,\infty)$ there exists a strongly decomposable FBM $X$ for which, a.s., 
\begin{align*}
\limsup_{t \to -\infty} \left( X_t - f(-t) \right) \ge 0 \qquad\mbox{ and }\qquad 
\liminf_{t \to -\infty} \left( X_t + f(-t) \right) \le 0. \end{align*}
\end{theorem}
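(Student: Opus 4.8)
The plan is to reduce to a one-sided statement and then build a strongly decomposable FBM as a concatenation of i.i.d.\ pieces that, every so often, exhibit an anomalously large excursion. First I would observe that it suffices to produce, for the given $f$, a single strongly decomposable $X$ satisfying the first inequality $\limsup_{t\to-\infty}(X_t-f(-t))\ge 0$: if the law of the generic piece $(T_k,\{B^k_t\})$ in Definition \ref{d17.3} is chosen invariant under $B^k\mapsto -B^k$, then $-X$ is again a strongly decomposable FBM with the same piece law, and applying the first inequality to $-X$ yields $\liminf_{t\to-\infty}(X_t+f(-t))\le 0$. Since each one-sided conclusion holds almost surely, so does their intersection, and nothing is lost. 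Thus I would focus on forcing $X_t\ge f(-t)$ along a sequence $t\to-\infty$.

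For the construction I would take i.i.d.\ pieces whose law depends on $f$ and is engineered so that, with a probability that decays only slowly, a single piece contains an excursion large enough to beat $f$ at the backward time where that piece sits. Writing $|S_n|=\sum_{k=n}^{-1}T_k$, the $n$-th backward piece occupies $[S_{n-1},S_n]$ and is an independent copy of the generic piece. The governing geometric fact is that a Brownian piece of duration $D$ has maximal height of order $\sqrt D$; so if the duration can be made comparable to $f(|S_n|)^2$ while the peak of the excursion falls at a backward time $|t|$ that is only a small fraction of $D$, then $X_t\gtrsim f(|t|)$ there. I would therefore let the generic piece be a mixture: with high probability a short ``filler'' piece that advances the clock in a controlled way, and with small probability a ``giant'' piece---for instance Brownian motion run to a heavy-tailed random level, or for a heavy-tailed random time---producing a large excursion. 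The tail of the giant piece's size is then selected, as a function of $f$, so that the probability $p_n$ that the $n$-th backward piece beats $f$ at its location satisfies $\sum_n p_n=\infty$.

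Given such a law, I would conclude with the second Borel--Cantelli lemma: the events ``the $n$-th backward piece beats $f$'' are independent (the pieces are i.i.d.)\ and have divergent probability sum, so infinitely many occur almost surely, giving $\limsup_{t\to-\infty}(X_t-f(-t))\ge 0$ (in fact the limsup is $+\infty$), and the reflection argument supplies the companion inequality. The main obstacle---and the step that dictates the entire construction---is the coupling between a piece's excursion size, its duration, and its position: a larger excursion requires a longer duration, which both inflates the accumulated time $|S_n|$ at which $f$ is evaluated, thereby raising the very threshold $f(|S_n|)$ to be beaten, and is constrained by the law-of-the-iterated-logarithm control of the backbone from Proposition \ref{LIL}. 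Overcoming this forces a delicate, $f$-dependent choice of the filler/giant mixture and, crucially, of the tails of the giant pieces' durations and heights---most plausibly with infinite-mean durations---so that the partial sums $|S_n|$ can be controlled (by tail estimates or a law of large numbers) while the success probabilities $p_n$ remain non-summable even though $f(|S_n|)\to\infty$. Verifying $\sum_n p_n=\infty$ for the chosen law is where the substantive work lies.
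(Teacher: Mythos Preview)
Your architecture matches the paper's: a symmetric i.i.d.\ piece law that is a mixture of ``fillers'' and rare ``giants'' of $f$-dependent size, followed by a Borel--Cantelli argument. The symmetry reduction to one inequality is fine (the paper achieves the same effect with the random sign of its jump variable $Y$), and you are right that Proposition~\ref{LIL} controls $X_{S_n}$.

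There is, however, a genuine gap. The events ``the $n$-th backward piece beats $f$'' are \emph{not} independent: whether the $n$-th piece beats $f$ depends on its location $|S_n|$ and on the base value $X_{S_n}$, both of which are functions of all the other pieces. So the second Borel--Cantelli lemma does not apply directly, and indexing by the piece number $n$ cannot be made to work for arbitrary $f$ with a single heavy-tailed ``giant'' law. You correctly flag this coupling as the obstacle, but the proposal does not explain how to break it---and this is precisely the substantive idea of the proof, not a routine verification.

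The paper decouples as follows. It fixes the \emph{probabilities} $p_m=2^{-m^2-1}$ of the giant levels in advance and lets the giant at level $m$ have size $n_m$ to be chosen later. Writing $K(m)$ for the first backward index at which a level-$m$ giant occurs, the event $C_m=\{K(m)>\max_{j>m}K(j)\}$ has $\P(C_m^c)\le 2^{-2m}$, and on $C_m$ the pieces between $K(m)$ and $0$ involve only levels $<m$; hence the conditional law of the location $S_{K(m)+1}$ on $C_m$ does not depend on $n_m,n_{m+1},\dots$. This permits an inductive definition of $n_m$ large enough that $\P(n_m\ge 2f(-S_{K(m)+1}+1)\mid C_m)\ge 1-2^{-m}$. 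First Borel--Cantelli on $C_m^c$ and on $A_m^c$ forces all but finitely many $A_m\cap C_m$ to hold; the second Borel--Cantelli step is applied not to piece-indexed events but to the independent \emph{sign} events $V_m=\{Y_{K(m)}=+n_m\}$, which are i.i.d.\ Bernoulli$(1/2)$ since the sign of $Y$ is independent of $|Y|$. On $A_m\cap V_m$ one has $Y_{K(m)}\ge 2f(-S_{K(m)+1}+1)$, and together with Proposition~\ref{LIL} and the harmless assumption $f(x)\ge 2\sqrt{2x\log^+\log^+x}$ this yields the claim. The switch from indexing by piece number to indexing by giant level, together with the inductive choice of $n_m$ exploiting that $S_{K(m)+1}\mid C_m$ is insensitive to $n_m$, is what your outline is missing.
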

 
\begin{proof}
We will assume without loss of generality that 
\begin{equation}\label{loglog}
f(x) \ge  2\sqrt{2x \log^+\log^+ x},\qquad x \ge 0,
\end{equation}
where $\log^+ x:=\max\{\log x,1\}$.

Our construction of $X$ will be based on a random variable $Y$ whose distribution will be specified in several steps. Suppose that $Y$ and a Brownian motion $B$
are defined on the same probability space and are independent. Let 
$$
T:=\inf\{t \ge 1: B_t-B_{t-1}=Y\}.
$$
Let $(Y_k,\,B^k,\, T_k)$, 
$k \in \Z$, be independent copies of $(Y,\,B,\,T)$ and define the $S_k$'s and $X$ as in Definition \ref{d17.3}.

We will later specify a sequence $\{n_k\}_{k\in \N_0}$ of non-negative real numbers strictly increasing to $\infty$.
We define the distribution of $Y$ by 
\begin{align*}
\P(Y=n_k)&=\P(Y=-n_k)=2^{-k^2-1}=:p_k,\qquad k \in \N,\\ 
\P(Y=0)&=1-2\sum_{k=1}^{\infty} p_k=:p_0.
\end{align*}
Let $K(m)$ be the largest negative integer $k$ for which $|Y_k|=n_m$ and define the events
$$
C_m:=\{K(m)>\max_{j>m}K(j)\}.
$$ 
Let $q_m = \sum_{j>m} p_j$.
It is elementary to see that $\P(C_m) = p_m/(p_m + q_m)$ so 
\begin{align}\label{d19.1}
\P(C_m^c) = \frac{q_m}{p_m+ q_m} \leq \frac{q_m}{p_m}
= \frac{\sum_{j>m} 2^{-j^2-1}}{2^{-m^2-1}}
\leq \frac{2\cdot 2^{-(m+1)^2-1}}{2^{-m^2-1}}
= 2^{-2m}.
\end{align}
Hence, $\sum_{m=1}^{\infty} \P(C_m^c) < \infty$ 
and, by the Borel-Cantelli Lemma, almost surely, all but finitely many of the $C_m$ occur. This means that there 
exists almost surely some random $m_0$ such that $K(m+1)< K(m)$ for all $m \ge m_0$.   

We will show that, for suitably chosen $\{n_k\}_{k\in \N_0}$, each of the inequalities
\begin{equation}\label{ypsilon}
Y_{-k} \ge 2f(-S_{-k+1}+1)\qquad \mbox{ and }
\qquad Y_{-k} \le -2f(-S_{-k+1}+1)
\end{equation}
holds for infinitely many $k \in \N$ almost surely. Once we have shown this, then the theorem follows from Proposition \ref{LIL}
and \eqref{loglog}. 
By symmetry, it suffices to show the first of the two inequalities in \eqref{ypsilon}. 

For a given function $f$, we will define the numbers $n_k$ inductively, starting with $n_0=0$.   
Note that the law of $S_{K(m)+1}$ conditioned on $C_m$ does not depend on the choice of $\{n_k\}_{k\ge m}$. 
For $m \in \N$, let $n_m$ be so large that $n_m>n_{m-1}$ and 
$$
\P( n_m \ge 2 f(-S_{K(m)+1}+1)\mid C_m) \ge 1-2^{-m}. 
$$
Define
$$
A_m:=\{n_m \ge 2 f(-S_{K(m)+1}+1)\}.
$$
Then,
$$
\P(A_m) \ge \P(A_m\mid C_m)\P(C_m) \ge (1-2^{-m}) \P(C_m)
$$
and, therefore, in view of \eqref{d19.1},
$$
\sum_{m=1}^{\infty} \P(A_m^c) \le \sum_{m=1}^{\infty} (1-\P(C_m)+2^{-m}\P(C_m))
=\sum_{m=1}^{\infty} (\P(C_m^c)+2^{-m}\P(C_m))<\infty, 
$$
which implies, by the Borel-Cantelli Lemma, that all but finitely many of the events $A_m$
occur. Next, let
$$
V_m:=\{Y_{K(m)}\ge 0\} =\{Y_{K(m)}=n_m\}.
$$
Then
\begin{equation}\label{Am}
A_m \cap V_m \subseteq \{Y_{K(m)}\ge  2 f(-S_{K(m)+1}+1)\}. 
\end{equation}
Since the $V_m$'s are i.i.d. and $\P(V_m)=1/2$, almost surely infinitely many of the $V_m$'s occur and, therefore, infinitely many of the
$A_m \cap V_m$'s occur. 
Together with \eqref{Am} this implies \eqref{ypsilon} and the theorem is proved. 
\end{proof}

\section{Minimum asymptotic range}\label{minrange}

In the previous section, we showed that the $\limsup$ of an FBM, as $t\to-\infty$, can be ``arbitrarily large.'' In this section we will show 
that the $\liminf$ of an FBM in the backward direction cannot be arbitrarily 
large. We will consider regions in space-time of the form $\cR:=\{(t,x):t<0, c_1 \sqrt{|t|} < x < c_2 \sqrt{|t|}\}$ into which paths of an FBM may fit, at least asymptotically.  
Roughly speaking, there exist FBM's whose paths stay inside $\cR$ as $t\to -\infty$ if
and only if $c_1$ and $c_2$ are not too close to each other. Here, being ``close'' is a condition more complicated than a bound on $c_2-c_1$. Examples of ``critical pairs'' of $(c_1,c_2)$ are $(1,\infty)$, $(-1,1)$ and $(0, 2.12)$ (the last number is approximate).
On the technical side, this section is closely related to the problem of slow points for Brownian motion studied in \cite{Davis83,GP83,Perkins}. We will mostly cite \cite{Perkins}.

\begin{remark}\label{d23.2}
In this remark, we collect some results from \cite[p.~371]{Perkins}.
Let
\begin{align*}
\cC &= \{(c_1, c_2) : - \infty \leq c_1 < c_2 \leq \infty\},\\
\cA &= \frac12 \left( \frac {d^2}{dx^2} - x \frac d {dx}\right),
\end{align*}
and $m(dx) = 2 e^{-x^2/2}dx$. For each $(c_1,c_2)\in\cC$ , there is a complete orthonormal
system in $L^2([c_1,c_2],m)$ of eigenfunctions of the Sturm-Liouville problem
\begin{align*}
\cA \psi = \lambda \psi, \qquad \psi(c_i)=0, i=1,2,
\qquad \text{  if  } |c_i | < \infty,
\end{align*}
whose corresponding eigenvalues are simple and non-positive. 
Let $- \lambda_0(c_1,c_2)$ denote the largest eigenvalue.
The corresponding eigenfunction $\psi (c_1, c_2, x)$ can be assumed to be strictly positive on
$(c_1, c_2)$. 

The function $\lambda_0$ is continuous on $\cC$ and strictly positive on $\cC\setminus \{-\infty,\infty\}$. The function
$\lambda_0(\,\cdot\,,c_2)$ is strictly increasing on $[-\infty,c_2)$ and $\lambda_0(c_1, \,\cdot\,)$ is strictly decreasing on $(c_1, \infty]$.
\end{remark}

\begin{remark}\label{d27.1}
For our results, just like for many results in \cite{Davis83,GP83,Perkins}, the critical value of $\lambda_0$ is 1, so it is of interest to know for which values of $c_1$ and $c_2$ we have  $\lambda_0(c_1,\,c_2)=1$. Some examples of such pairs are
$(-\infty,-1),\,(-1,1)$ and $(1,\infty)$ 
(see \cite[Prop.~1]{Perkins}). It is natural to ask what $c'_2$ satisfies $\lambda_0(0,\,c'_2)=1$.
The approximate value of such $c'_2$ is $2.12411$.
We found this value as follows.
Observe that 
$$
\psi(x) = 2 \exp(x^2/2) x + \sqrt{2 \pi} \erfi(x/\sqrt{2})
 - \sqrt{2 \pi} x^2 \erfi(x/\sqrt{2})
$$
satisfies the equation $(1/2) (\psi''(x) - x \psi'(x)) = -\psi(x)$
and $\psi(0) = 0$. Here 
$\erfi(x) = -i \erf(i x)$ and
$\erf(x) = (2/\sqrt{\pi})\int_0^x e^{- t^2} dt$.
The function $\psi$ is strictly positive on an interval $(0,\,c'_2)$ and vanishes at the endpoints of this interval. We determined that $c'_2 \approx 2.12411$ 
by solving $\psi(x) = 0$ numerically.

We note that $c'_2$ appears to be the same as $c(3) $ on page 376 in \cite{Perkins}.
We offer an informal explanation for the coincidence. The constant $c(3) $ corresponds
to 3-dimensional Bessel process staying under a parabola. This problem
can be equivalently represented as that about 
1-dimensional Brownian motion staying between 0 and the same parabola,
because 1-dimensional Brownian motion conditioned not to hit 0 is 
3-dimensional Bessel process.
\end{remark}

\begin{theorem}\label{d30.5}
{\rm{(i)}} If $\lambda_0(c_1,c_2)\ge 1$ and $X$ is FBM, then 
$$
\P\left(\{\limsup_{t \to -\infty} {X_t}/{\sqrt{|t|}}\ge c_2\} \cup 
\{\liminf_{t \to -\infty} {X_t}/{\sqrt{|t|}}\le c_1\}\right)=1.
$$

{\rm{(ii)}} If $\lambda_0(c_1,c_2)\leq 1$, then there exists a decomposable FBM $X$ such that 
\begin{align}\label{d26.4ii}
\P\left(\{\limsup_{t \to -\infty} {X_t}/{\sqrt{|t|}}\le c_2\} \cap 
\{\liminf_{t \to -\infty} {X_t}/{\sqrt{|t|}}\ge c_1\}\right)=1.
\end{align}

{\rm{(iii)}} If $c_1 \leq 0 \leq c_2$ and $\lambda_0(c_1,c_2)< 1$, then there exists a decomposable FBM $X$ such that 
\begin{align}\label{d26.4iii}
\P\left(\{ c_1{\sqrt{|t|}} \le X_t\le c_2{\sqrt{|t|}}\; \forall t \le 0\}\right)=1.
\end{align}
\end{theorem}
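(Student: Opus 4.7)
The three parts split into a lower bound (i) and two explicit constructions (ii)--(iii). They are linked by the logarithmic time change $t = -e^{\tau}$: the containment $c_1\sqrt{|t|} \le X_t \le c_2\sqrt{|t|}$ translates to $Z_{\tau} := X_{-e^{\tau}}/e^{\tau/2} \in [c_1,c_2]$, and when $X$ is two-sided Brownian motion, $Z$ is a stationary Ornstein--Uhlenbeck process with generator $\cA$ from Remark \ref{d23.2}, whose Dirichlet semigroup in $[c_1,c_2]$ decays at the rate $\lambda_0(c_1,c_2)$. The threshold $\lambda_0 = 1$ is the rate that balances this decay against the length $\tau \asymp \log|S_n|$ of the relevant time window.

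For (i), the plan is to argue by contrapositive. Assume $\lambda_0(c_1,c_2) \ge 1$. By continuity of $\lambda_0$ (Remark \ref{d23.2}) and a countable union over rational parameters, it suffices, for each fixed $T_0 > 0$ and each strict sub-wedge $(c_1',c_2') \subset (c_1,c_2)$ with $\lambda_0(c_1',c_2') > 1$, to show
\[
\P\bigl(c_1'\sqrt{|t|} \le X_t \le c_2'\sqrt{|t|}\ \forall\, t \le -T_0\bigr) = 0.
\]
For each large $n$, this event constrains the Brownian motion $W^n_s := X_{S_n+s} - X_{S_n}$ to a curved envelope on $s \in [0,\,|S_n|-T_0]$. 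Rescaling by $\sqrt{|S_n|}$ and applying the time change $s = |S_n|(1-e^{-\tau})$, the envelope becomes confinement of a Brownian-derived process in $[c_1',c_2']$ over $\tau \in [0,\, \log(|S_n|/T_0)]$, modulo a lower-order shift by $X_{S_n}/\sqrt{|S_n|}$. The Sturm--Liouville bound from \cite{Perkins} then estimates this probability by $C(T_0/|S_n|)^{\lambda_0(c_1',c_2')}$, which tends to zero since $\lambda_0(c_1',c_2') > 1$ and $|S_n| \to \infty$ a.s.

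For (iii), I construct a decomposable FBM by direct analogy with Example \ref{d17.1}: concatenate independent Brownian pieces $(T_k, B^k)$ each stopped at an exit time tuned to the parabolic wedge. The positive eigenfunction $\psi(c_1,c_2,\cdot)$ from Remark \ref{d23.2} serves as a Lyapunov function controlling the exit probability. Working backward from $S_0 = 0$, the stopping time $T_k$ for piece $B^k$ is chosen as the first time that $X_{S_k} + B^k_t$ would escape the wedge $[c_1\sqrt{|S_k+t|},\, c_2\sqrt{|S_k+t|}]$. The hypothesis $\lambda_0 < 1$ ensures, via the survival probability estimate for the Dirichlet OU, that the $T_k$ are a.s.{} finite and that $\sum_k T_k = \infty$ in both directions, so the concatenation defines an FBM whose trajectory lies in the wedge for every $t \le 0$. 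For (ii), the asymptotic wedge is weaker; the critical case $\lambda_0 = 1$ can be obtained by running (iii) on a sequence of sub-wedges $[c_1 + \epsilon_k,\, c_2 - \epsilon_k]$ with $\epsilon_k \downarrow 0$ and gluing the trajectories at deterministic times going to $-\infty$, which is enough since only the asymptotic wedge is demanded.

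The main obstacle in (i) is that $|S_n|$ is typically not independent of $W^n$: for a decomposable $X$ with $X_0 = 0$, the identity $W^n_{|S_n|} = -X_{S_n}$ makes $|S_n|$ a $W^n$-stopping time with prescribed endpoint, so the rescaled $W^n$ is not a free Brownian motion on $[0,1]$. I would handle this by conditioning on the pair $(|S_n|, X_{S_n})$ and using the Brownian-bridge representation of $W^n$ on $[0,\,|S_n|]$; the bridge confinement probability is controlled by the same Sturm--Liouville eigenvalue. In (iii), the central difficulty is carrying out the piecewise construction so that the wedge is respected at \emph{every} $t \le 0$, including the regime $|t| \downarrow 0$ where the wedge shrinks to a point; this is the delicate step where the strict inequality $\lambda_0 < 1$ (as stipulated) rather than $\le 1$ is essential.
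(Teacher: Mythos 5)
Your proposal correctly identifies the Ornstein--Uhlenbeck picture and the role of $\lambda_0$, but both halves break down at the crucial step. For (i), the obstacle you flag is fatal to your route, and the fix you propose does not exist: the definition of FBM pins down only the \emph{unconditional} law of $W^n_s=X_{S_n+s}-X_{S_n}$, and says nothing about its conditional law given $(|S_n|,X_{S_n})$ --- that conditional law can be essentially arbitrary subject to the mixture being Wiener measure (compare the pathology in Remark \ref{d18.5}), so there is no Brownian-bridge representation to invoke. The paper avoids conditioning entirely: it packages the confinement as the event that the Brownian motion $B_u=X_{S+u}-X_S$ admits \emph{some} time $t$ from which the backward increments are parabolically confined, i.e.\ $\{T_{a,b,c_1-\delta,c_2+\delta}<\infty\}$ with $T_{a,b,c_1,c_2}=\inf\{t\ge a\vee b: B(s)\in[B(t)+c_1\sqrt{t-s},\,B(t)+c_2\sqrt{t-s}]\ \forall\, 0\le s\le t-b\}$; the existential quantifier over $t$ absorbs the unknown random time $-S$. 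Proving $\lim_{b\to 0}\P(T_{2,b,\cdot,\cdot}<\infty)=0$ (the key estimate \eqref{d24.1}) then requires \emph{two} inputs: the summability $\sum_n r(n,a,c_1,c_2)<\infty$ from $\lambda_0>1$ (your Sturm--Liouville decay), and Perkins' theorem that a.s.\ there are no two-sided parabolic slow points, which makes the local factor $q(b,1,c_1,c_2)$ tend to $0$. Your sketch contains only the first; without the second, the union over candidate times $t$ yields a finite constant rather than something small.

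For (iii), stopping each piece at the first \emph{exit} time of the wedge cannot work: the concatenated path would sit on the wedge boundary at every $S_{k+1}$, and the next independent Brownian piece, started on a smooth boundary, crosses it immediately a.s. Nor does the eigenfunction-as-Lyapunov-function idea help: an $h$-transform by $\psi$ produces a conditioned diffusion, not a concatenation of pieces of \emph{standard} Brownian motion, so it is not a decomposable FBM. The correct stopping rule (the paper's \eqref{d30.1}) is the first time $t$ from which the \emph{backward} increments $B^k(t-s)-B^k(t)$ lie in $[c_1\sqrt{s},c_2(k)\sqrt{s}]$; the a.s.\ finiteness of such times is exactly Perkins' existence theorem for parabolic slow points when $\lambda_0<1$ (via \eqref{d26.5} and \eqref{d25.2}), an ingredient your proposal never invokes, and the telescoping choice $c_2(k)=c_2-\eps+\eps/|k|$ is what makes the union of backward wedges fit inside the global one. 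Finally, your reduction of (ii) to (iii) covers only $c_1\le 0\le c_2$: when $0<c_1\le c_2$ (e.g.\ $(c_1,c_2)=(1,\infty)$, which has $\lambda_0=1$) there is no version of (iii) to run, the region $\{(t,x):c_1\sqrt{|t|}\le x\le c_2\sqrt{|t|}\}$ is non-convex, and the paper needs a substantially more elaborate two-scale construction (the constraints \eqref{first}--\eqref{fifth} and the stopping times \eqref{d29.4}) to handle it; that case is entirely absent from your proposal.
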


\begin{proof}
(i) Fix $-\infty<c_1<c_2<\infty$ such that $\lambda_0(c_1,c_2) >1$ and let $B$ denote standard Brownian motion. For $a \ge 0$ and $n \in \N_0$, let 
\begin{align*}
F(n,a) & := \{B(s) \in [c_1 \sqrt{s}-a,c_2 \sqrt{s}+a]\,\forall \, 0 \le s \le n\}, \\
r(n,a,c_1,c_2)&:= \P(F(n,a)). 
\end{align*}
We will show that if  $\lambda_0(c_1,c_2)>1$, then for any $a>0$, 
\begin{align}\label{d23.1}
\sum_{n=1}^{\infty}r(n,a,c_1,c_2) < \infty.
\end{align}

By the 
continuity of $\lambda_0(\,\cdot\,,\,\cdot\,)$ 
(see Remark \ref{d23.2}), we can choose $\delta>0$ such that $\lambda_0(c_1-\delta,c_2+\delta)>1$.
Let $A>0$ be such that $a+c_2\sqrt{s} \le (c_2+\delta)\sqrt{s}$ and $-a+c_1\sqrt{s} \ge (c_1-\delta)\sqrt{s}$ hold for all $s \ge A$. Then, using Brownian scaling, we obtain
\begin{align*}
r(n,a,c_1,c_2)&\le \P\{B(s) \in [(c_1-\delta)\sqrt{s},(c_2+\delta)\sqrt{s}]\, \forall A \le s \le n\}\\
&=\P\{B(u) \in [(c_1-\delta)\sqrt{u},(c_2+\delta)\sqrt{u}]\, \forall 1 \le u \le n/A\}\\
&\sim K_1(c_1-\delta,c_2+\delta)  \Big( \frac nA \Big)^{-\lambda_0(c_1-\delta,c_2+\delta)},
\end{align*} 
where $K_1(c_1-\delta,c_2+\delta) \in (0,\infty)$ and the asymptotic equivalence follows from \cite[Lem.~10(b)]{Perkins}. Since the right hand side is summable the proof of 
\eqref{d23.1} is complete.

For $a,b \ge 0$, let
$$
T_{a,b,c_1,c_2}:=\inf\{ t \ge a \vee b: \, B(s) \in [B(t)+c_1\sqrt{t-s},\, B(t) +c_2 \sqrt{t-s}]\,\forall  \, 0 \le s \le t-b\}.
$$
We will show that,
\begin{align}\label{d24.1}
\lim_{b \to 0} \P(T_{2,b,c_1,c_2} < \infty)=0.
\end{align}
For $0 \le b_1 < b_2$, the (random) sets
$$
\Lambda(b_1,b_2):=\{t \in [0,1]: B(s) \in [B(t)+c_1 \sqrt{s-t},B(t)+c_2 \sqrt{s-t}]\; \forall s \in [t+b_1,t+b_2]\}
$$
are compact and for every $b_2>0$, 
$\bigcap_{b_1\in(0,b_2)} \Lambda(b_1,b_2) = \emptyset$, a.s., by \cite[Thm.~2(a)]{Perkins}.
Therefore, there exists a random $b_0=b_0(b_2)>0$ such that $\Lambda(b_1,b_2)=\emptyset$ for all 
$0 \le b_1 <b_0$.
Hence, if we write 
$q(b_1,b_2,c_1,c_2) = \P(\Lambda(b_1,b_2) \ne \emptyset)$ then
\begin{equation}\label{q}
\lim_{b_1 \to 0} q(b_1,b_2,c_1,c_2) =\lim_{b_1 \to 0} \P(\Lambda(b_1,b_2)\neq \emptyset)=0.  
\end{equation}

For each $n \in \N,\, n\ge 2$, the processes $\{B'_t := B_{n+1 - t} - B_{n+1}, t\in[0,2]\}$ and $\{B''_t := B_{n-1 - t} - B_{n-1}, t\in[0,n-1]\}$ are independent Brownian motions. Note that for $b \in [0,1)$ we have
\begin{align*}
\{T_{2,b,c_1,c_2} \in [n,n+1)\}
\subset \{\Lambda(b,1, B')\neq \emptyset\} \cap
F(n-1,(|c_1|+|c_2|)\sqrt{2}, B''),
\end{align*}
where $\Lambda(b,1, B') $ and $F(n-1,(|c_1|+|c_2|)\sqrt{2}, B'')$ denote $\Lambda(b,1) $ and $F(n-1,(|c_1|+|c_2|)\sqrt{2})$
defined relative to the processes $B'$ and $B''$, resp., in place of $B$.
We obtain, 
\begin{align*}
\P(T_{2,b,c_1,c_2} < \infty)
&=\sum_{n=2}^{\infty}\P(T_{2,b,c_1,c_2} \in [n,n+1))\\
&\le \sum_{n=2}^{\infty} 
r(n-1,(|c_1|+|c_2|)\sqrt{2},c_1,c_2)\,q(b,1,c_1,c_2)\\
&= q(b,1,c_1,c_2)\, \sum_{n=2}^{\infty} r(n-1,(|c_1|+|c_2|)\sqrt{2},c_1,c_2).  
\end{align*}
The last sum is finite (and independent of $b$) by \eqref{d23.1}. We conclude that \eqref{d24.1} holds in view of \eqref{q}.

Assume that
\begin{align}\label{d24.2}
\P\left(\{\limsup_{t \to -\infty} {X_t}/{\sqrt{|t|}}\le c_2\} \cap 
\{\liminf_{t \to -\infty} {X_t}/{\sqrt{|t|}}\ge c_1\}\right)=:q>0.
\end{align}
To prove part (i) of the theorem, it will suffice to show that this assumption leads to a contradiction.

Recall that we have
chosen $\delta>0$ such that $\lambda_0(c_1-\delta,c_2+\delta)>1$. Assuming \eqref{d24.2}, we can 
find some $M \in (-\infty, 0)$ such that 
\begin{align}\label{d24.5}
\P\left(\{\sup_{t \le M} {X_t}/{\sqrt{|t|}}\le c_2+\delta\} \cap 
\{\inf_{t \le M} {X_t}/{\sqrt{|t|}} \ge c_1-\delta\}\right)\ge \frac q2.
\end{align}
Consider $\wt M<M$, whose value will be specified later. 
Since $X$ is FBM, there exists a random time $S$ such that 
$\P(S \le \wt M) \ge 1-\frac q4$ and $\{X_{S+t}-X_S,\,t \ge 0\}$ is Brownian motion. 
Then, 
\begin{align}\label{d24.3}
\P&\left(\{\sup_{t \le M} {X_t}/{\sqrt{|t|}}\le c_2+\delta\} \cap 
\{\inf_{t \le M} {X_t}/{\sqrt{|t|}} \ge c_1-\delta\}\right)\\
&\le \P(S > \wt M) + \P(T_{-\wt M,-M,c_1-\delta,c_2+\delta}<\infty)\nonumber \\
&\le \P(T_{-\wt M,-M,c_1-\delta,c_2+\delta}<\infty) +\frac q4 \nonumber \\
&=\P(T_{-\alpha\wt M,-\alpha M,c_1-\delta,c_2+\delta}<\infty) +\frac q4, \nonumber
\end{align}
for any $\alpha>0$, where the last equality follows from Brownian scaling. 
By \eqref{d24.1}, we can make 
$\alpha >0$ so small that
$\P(T_{2,-\alpha M,c_1-\delta,c_2+\delta}<\infty) < q/8$.
Then we 
choose $\wt M$ so that $-\alpha \wt M = 2$. The left hand side of \eqref{d24.3} is therefore less than $3q/8$, which contradicts \eqref{d24.5}. 
This proves part (i) in case $\lambda_0(c_1,c_2)>1$ and $-\infty<c_1<c_2<\infty$.  

If $\lambda_0(c_1,c_2)=1$ and $-\infty<c_1<c_2<\infty$, then $\lambda_0(c_1+\varepsilon, c_2-\varepsilon)>1$
for every $\varepsilon \in (0, \frac 12 (c_2-c_1))$,
by Remark \ref{d23.2}. We have already shown that, for every $\eps>0$,
$$
\P\left(\{\limsup_{t \to -\infty} {X_t}/{\sqrt{|t|}}\ge c_2-\eps\} \cup 
\{\liminf_{t \to -\infty} {X_t}/{\sqrt{|t|}}\le c_1+\eps\}\right)=1.
$$
This implies that 
$$
\P\left(\{\limsup_{t \to -\infty} {X_t}/{\sqrt{|t|}}\ge c_2\} \cup 
\{\liminf_{t \to -\infty} {X_t}/{\sqrt{|t|}}\le c_1\}\right)=1,
$$
and completes the proof of part (i) in case $-\infty<c_1<c_2<\infty$. The case $c_2=\infty$ is 
treated by applying the previous result to a sequence $c_{2,n} \to \infty$ (and similarly for $c_1=-\infty$).

\bigskip
(ii), (iii) 
According to \cite[Thm.~2(a)]{Perkins},
\begin{align}\label{d26.5}
\P(\exists t \ge 0,\,\Delta>0 : B(t+h)-B(t) \in [c_1 \sqrt{h},\,c_2 \sqrt{h}]\  \forall h \in [0,\Delta])=
\left\{
\begin{array}{ll} 
0&{\rm{ if }}\,\lambda_0(c_1,c_2)>1,\\
1&{\rm{ if }}\,\lambda_0(c_1,c_2)<1.
\end{array}
\right.
\end{align}

First suppose that $c_1 \leq 0< c_2$ and $\lambda_0(c_1,c_2)<1$. By Remark \ref{d23.2} it suffices to prove  \eqref{d26.4ii} and  \eqref{d26.4iii} in case $c_1>-\infty$ and $c_2<\infty$.
These assumptions, \eqref{d26.5}, invariance of Brownian motion under time reversal, support theorem, and standard 
arguments imply that
\begin{align*}
\P(\exists t \in[1,2] : B(t-s)-B(t) \in [c_1 \sqrt{s},\,c_2 \sqrt{s}]\  \forall s \in [0,t])
>0.
\end{align*}
Another easy application of the support theorem and Brownian scaling allows to strengthen the above claim to the following.
If $\lambda_0(c_1,c_2)<1$ and $\delta \in (0,c_2)$ then there exists $p_1>0$ such that for every $a\in(0,\infty)$,
\begin{align}\label{d25.1}
&\P\big(\exists t \in[a/2,a] : B(t)-B(0) \in  [-\frac{\delta}2 \sqrt{t},-\frac{\delta}4 \sqrt{t}] \\ 
&\qquad \text{  and  }
B(t-s)-B(t) \in [c_1 \sqrt{s},\,c_2 \sqrt{s}]\  \forall s \in [0,t]\big)\nonumber\\
&= \P\big(\exists t \in[1,2] : B(t)-B(0) \in   [-\frac{\delta}2 \sqrt{t},-\frac{\delta}4 \sqrt{t}]  \nonumber \\
&\qquad \text{  and  }
B(t-s)-B(t) \in [c_1 \sqrt{s},\,c_2 \sqrt{s}]\  \forall s \in [0,t]\big) \nonumber\\
&=p_1>0. \nonumber
\end{align}
Let $u_n = \exp(\exp(\exp(n)))$ for $n\in \N$. Note that for large $n$ (depending on $\delta$),
\begin{align}\label{d25.3}
(\delta/4)\sqrt{(u_{n+1} - u_n)/2}
\geq (\delta/8)\sqrt{u_{n+1}}
\geq u_n > 3\sqrt{2 u_n \log\log u_n}.
\end{align}
The processes $\{Y^n(t):= B(u_n+t) - B(u_n), t\in [0, u_{n+1} - u_n]\}  $ are independent Brownian motions. 
The events
\begin{align*}
F_n :=
\big\{&\exists t \in[(u_{n+1} - u_n)/2,u_{n+1} - u_n] : Y^n(t)-Y^n(0)\in[-\frac{\delta}2 \sqrt{t},-\frac{\delta}4 \sqrt{t}]    \\
& \text{  and  }
Y^n(t-s)-Y^n(t) \in [c_1 \sqrt{s},\,c_2 \sqrt{s}]\  \forall s \in [0,t]\big\}
\end{align*}
are independent and each one of them has probability $p_1$, by \eqref{d25.1}. Hence, infinitely many events $F_n$ occur, a.s. 
By the law of the iterated logarithm, a.s., for all sufficiently large $n$,
\begin{align}\label{LiL}
\sup_{0\leq s \leq u_n} |B(s)| <
2\sqrt{2 u_n \log\log u_n}.
\end{align}
If \eqref{d25.3}, \eqref{LiL} and $F_n$ hold 
then the following event occurs,
\begin{align*}
\big\{&\exists t \in[(u_{n+1} + u_n)/2,u_{n+1}] : B(t)-B(0) \in [-\delta \sqrt{t},0] \\
& \text{  and  }
B(t-s)-B(t) \in [c_1 \sqrt{s},\,c_2 \sqrt{s}]\  \forall s \in [0,t]\big\}.
\end{align*} 
Since infinitely many events $F_n$ occur, a.s., we conclude that
if $\lambda_0(c_1,c_2)<1$, $\delta >0$ and $a<\infty$ then,
\begin{align}\label{d25.2}
\P\big(&\exists t \geq a : B(t)-B(0) \in [-\delta \sqrt{t},0]  \\
&\text{  and  }
B(t-s)-B(t) \in [c_1 \sqrt{s},\,c_2 \sqrt{s}]\  \forall s \in [0,t]\big)
=1. \nonumber
\end{align}

We recall the definition of decomposable FBM $X$ from Definition \ref{d17.3}.
Given $(B^k,T_k)$, $k\in \Z$,
let $S_0=0$, and use the conditions $S_{k+1} - S_{k} = T_k$ to define $S_k$ for $k\in \Z$.
Let $X$ be the unique continuous process such that $X_0 =0$ and
$X_{S_k +t} - X_{S_k} = B^{k}_{t}$ for $t\in [0, T_{k})$, $k \in\Z$.

Suppose that
$\{B^k_t, t\geq 0\}$, $k\in \Z  $, are independent Brownian motions.
Let $T_k \equiv 1$ for $k\in \N_0$. To define $T_k$ for negative $k$, observe that
for given $-\infty<c_1<c_2<\infty$ satisfying 
$c_1 \leq 0 < c_2$ and
$\lambda_0(c_1,c_2)<1$ we can find some $\varepsilon>0$ such that $c_2-\varepsilon>0$ 
and $\lambda_0(c_1,c_2-\varepsilon)<1$ by Remark \ref{d23.2}. For $-k \in \N$, let
$$
c_2(k):=c_2-\varepsilon+\frac {\varepsilon}{|k|},\,\delta(k):=\varepsilon\Big(\frac 1{|k|}-\frac 1{|k-1|}\Big).
$$ 
For $-k \in \N$, we define
\begin{align}
T_{k}: = 
\inf\{&t \geq 1:
B^{k}(t)-B^{k}(0) \in [-\delta(k) \sqrt{t},0] \nonumber \\
&\text{  and  }
B^{k}(t-s)-B^{k}(t) \in [c_1 \sqrt{s},\,c_2(k) \sqrt{s}]\  \forall s \in [0,t] \},\label{d30.1}
\end{align}
and note that $T_k < \infty$ a.s., by \eqref{d25.2}.

By construction, we have for $-k \in \N_0$ and $s \in [S_{k-1},S_k]$:
\begin{align*}
X_s &\le \sum_{i=1}^{-k} \delta (-i)\sqrt{S_{-i+1}-S_{-i}}+c_2(k-1)\sqrt{S_{k}-s}\\
&\le\left(  \sum_{i=1}^{-k} \delta (-i)+c_2(k-1)\right)\sqrt{-s}=c_2 \sqrt{-s},
\end{align*}
and $X_s \geq c_1\sqrt{-s}$ for all $s \leq 0$, so \eqref{d26.4iii} and hence \eqref{d26.4ii} follow in case $c_1 \leq 0<c_2$ and $\lambda_0(c_1,c_2)<1$. 

Now assume that $\lambda_0(c_1,c_2)=1$ (and still $c_1\leq 0<c_2$). Then, by Remark  \ref{d23.2},  $\lambda_0(c_1,c_2+\varepsilon)<1$ for every $\varepsilon >0$. 
Consider the FBM $X$ constructed in the previous paragraph but with $(c_1,c_2)$ replaced by $(c_1,c_2+1)$ and $\varepsilon=1$. 
Let $c_2(k):=c_2 +\frac 1{|k|}$ and $\delta(k):=\frac 1{|k|}-\frac 1{|k-1|}$. Then, we have for fixed $-m \in \N_0$ and $k \le m$ and
$s \in [S_{k-1},S_k]$:
\begin{align*}
X_s &\le \sum_{i=-m+1}^{-k} \delta (-i)\sqrt{S_{-i+1}-S_{-i}}+c_2(k-1)\sqrt{S_{k}-s}\\
&\le\left(  \sum_{i=-m+1}^{-k} \delta (-i)+c_2(k-1)\right)\sqrt{S_{m}-s}=c_2(m-1) \sqrt{S_{m}-s},
\end{align*}
and hence
$$
\limsup_{s \to -\infty} \frac{X_s}{\sqrt{|s|}}\le c_2(m-1),
$$
for every $-m \in \N_0$. Since $c_2(m)$ converges to $c_2$ as $m \to -\infty$ and since $X_s\geq c_1\sqrt{-s}$ for all $s \leq 0$,
the proof of (ii) and (iii) is complete in the case $c_1\leq 0<c_2$.

\bigskip
Next we consider the case when $\lambda_0(c_1,c_2)<1$ but it is not true that 
$c_1 \leq 0 < c_2$. By symmetry, we may and will assume that $0 < c_1 \leq c_2 \leq \infty$ and (again by Remark \ref{d27.1}) we can and will assume that 
$c_2<\infty$.  
In this case the reasoning in the previous case will not work because the region $\{(s,x): c_1 \sqrt{s} < x <c_2 \sqrt{s},\,s \ge 0\}$ is not convex. 
Consequently, our argument is more complicated in the present case. 

By Remarks \ref{d23.2} and \ref{d27.1}, $c_1 < 1$ and $c_2>2$.

Suppose that 
$\delta,\phi,a>0$. Let $c_1''$ be any number such that $c_1 < c_1''< 1$, $c_1''-c_1<\delta/4$ and $\lambda_0(c_1'',c_2)<1$.  
We will prove that for every $b\in(a,\infty)$,
\begin{align}\label{d27.5}
\P\big(&\exists t \geq b : B(0)-B(t)\in [c_1\sqrt{t},\,  (c_1+\delta) \sqrt{t}]\\
&\text{  and  }
B(t-s)-B(t) \in [c_1 \sqrt{s} ,\,c_2 \sqrt{s}]\  \forall s \in [a,\,t] \nonumber\\
&\text{  and  }
\,B(t-s)-B(t) \in [c_1'' \sqrt{s},\,c_2 \sqrt{s}\land(c_1'' \sqrt{s} + \phi)]\  \forall s \in [0,a]\}=1. \nonumber
\end{align}
In the proof we will need several strictly positive constants, namely $c_1',c_2',\varepsilon$ and $\hat \varepsilon$. We suppose that they satisfy the following constraints:
\begin{align}
&c_1''<c_1'< 1 < 2 <c_2'<c_2,\;c_1'-c_1''<\delta/4 \mbox{ and } \lambda_0(c_1',c_2')<1,\label{first}\\
&2\varepsilon+\phi+c_1''\sqrt{a+\hat \varepsilon} -c_1'\sqrt{a} \le 0,\label{third}\\
&(c_1''-c_1)\sqrt{a}\ge 2 \varepsilon,\label{fourth}\\
&(c_1''-c_1')\sqrt{a} \ge \varepsilon + c_1\sqrt{2a+\hat \varepsilon}-c_1'\sqrt{2a}.\label{fifth}
\end{align} 
It is easy to see that these constraints can all be fulfilled provided that $\phi$ is sufficiently small which we can and will assume without loss of generality.
One can first choose $c_1'$ and $c_2'$ satisfying \eqref{first}. Then one can choose $\phi,\varepsilon,\hat \varepsilon>0$ such that 
conditions \eqref{third}-\eqref{fifth} hold.

Let $\kappa:=\delta/2$. The following claim can be proved in the same way as
\eqref{d25.1}.
There exists $p_1>0$ such that for every $u\in(0,\infty)$,
\begin{align}\label{d30.10}
\P\big(&\exists t \in [u/2,u] :
B(t-s)-B(t) \in [c_1' \sqrt{s} ,\,h_u(s) ]\  \forall s \in [0,t]
\big)\\
&= \P\big(\exists t \in[1,2] : B(t-s)-B(t) \in [c_1' \sqrt{s},\,(c_2' \sqrt{s}) \wedge (c_1'\sqrt{s}+\kappa)]\  \forall s \in [0,t]\big) = p_1, \nonumber
\end{align}
where $h_u(s):=(c_2' \sqrt{s})\wedge (c_1'\sqrt{s}+\kappa \sqrt{u/2})$.   
Let $u>4a$ and define
\begin{align*}
U = \inf\{& \theta \in [\frac u2 -2a,u-2a] : \exists x \in \R \\
&\text{  such that  } 
B(\theta+2a-s)-x \in [c_1' \sqrt{s} ,\,h_u(s) ]\  \forall s \in [2a,\theta+2a]\}.
\end{align*}

Note that $U$ is a stopping time for $B$ (with the convention that $\inf \emptyset = \infty$) and that
\begin{align}\label{d27.7}
\P(U< \infty) \geq  p_1
\end{align}
for all $u>4a$ by \eqref{d30.10}.

On the set $\{U<\infty\}$ define $X^*$ as the largest number $x$ such that $B(U+2a-s)-x \in [c_1' \sqrt{s} ,\,h_u(s) ]$ for all $s \in [2a,U+2a]$ 
and let $X^*:=\infty$ on $\{U=\infty\}$. Observe that $X^*$ is $\F_U$-measurable.

On the set $\{U<\infty\}$ let $V(t):= B(U+t)-B(U)$ for $t\geq 0$. Since $B(U)-X^*$ is bounded from above and below by a deterministic constant (which does not depend on $u$) there 
exists $p_2>0$ (not depending on $u$) such that for all $ u > 4a$, on $\{U<\infty\}$,
\begin{equation}\label{erstens}
\P \left( \left|V(s)- (B(U)-X^*-c_1'\sqrt{a}) \frac{\sqrt{2a-s}-\sqrt{2a}}{\sqrt{a}(\sqrt{2}-1)} \right|\le \varepsilon \, \forall s \in [0,a] \mid \F_U \right) \ge p_2. 
\end{equation}
Further, there exists some $p_3>0$ (which does not depend on $u$) such that  for all $u>4a$ we have
\begin{align}\label{zweitens}
\P\big( \exists &\tau \in [2a,2a+\hat \varepsilon]: V(\tau -s)-V(\tau)  \in [c_1'' \sqrt{s},\,c_2 \sqrt{s}\land(c_1'' \sqrt{s}+\phi)]\  \forall s \in [0,\tau-a]     
\mid \F_{U+a} \big)\\ &\ge p_3  \quad\mbox{ on } \{U<\infty\}.\nonumber        
\end{align}
Let $G_1$ be the intersection of the set $\{U<\infty\}$ and the two sets inside the conditional probabilities in \eqref{erstens} and \eqref{zweitens}. 
By the strong Markov property we have $\P(G_1)\ge p_1 p_2 p_3$ for all $u > 4a$. Define
\begin{align*}
G_2 &:=\{\exists t \in [\frac u2,\,u+\hat \varepsilon]: B(0)-B(t) \in [c_1\sqrt{t},\,  (c_1+\delta) \sqrt{t}]\\
&\hspace{1cm}\text{  and  }
B(t-s)-B(t) \in [c_1 \sqrt{s} ,\,c_2 \sqrt{s}]\  \forall s \in [a,\,t]\\
&\hspace{1cm}\text{  and  }
\,B(t-s)-B(t) \in [c_1'' \sqrt{s},\,c_2 \sqrt{s}\land(c_1'' \sqrt{s} + \phi)]\  \forall s \in [0,a]\}.
\end{align*}
Once we know that (for a given $u>4a$) we have 
$G_1 \subseteq G_2$ then we obtain $\P(G_2)\ge p_1 p_2 p_3$. To see that $G_1 \subseteq G_2$ let  
$\tau$ be as in \eqref{zweitens} and $t:=U+\tau$.  Then the last property of $G_2$ clearly holds and the second one holds at least for 
$s \in [a,\,\tau -a]$. Now let $s\in [\tau-a,\tau]$. Then
\begin{align*}
B(t-s)-B(t)&=B(t-s) -B(t-\tau + a)+B(t-\tau + a)-B(t) \\
&= [V(\tau-s) - V(a)] +[B(t-\tau + a)-B(t)]\\
&\le \left[ 2 \eps + (B(U)-X^*-c_1'\sqrt{a}) \frac{\sqrt{s-\tau+2a}-\sqrt{a}}{\sqrt{a}(\sqrt{2}-1)}\right]
+ [c_1''\sqrt{\tau - a}  + \phi] \\
&\le 2\varepsilon + \frac{c_2'\sqrt{2}-c_1'}{\sqrt{2}-1}
( \sqrt{s-\tau+2a}-\sqrt{a})+ c_1''\sqrt{\tau - a}  + \phi\\
& \le c_2'\sqrt{s-\tau+2a}\\
& \le c_2\sqrt{s}.
\end{align*}
The second to last inequality holds for $s=\tau$ by \eqref{third}. Since the derivative with respect to $s$ of the left hand side is greater than that of the right hand side, the inequality holds for all $s\in [\tau-a,\tau]$. Further,
\begin{align*}
B(t-s)-B(t)&=B(t-s) -B(t-\tau + a)+B(t-\tau + a)-B(t) \\
&= [V(\tau-s) - V(a)] +[B(t-\tau + a)-B(t)]\\
&\ge \left[ -2 \eps + (B(U)-X^*-c_1'\sqrt{a}) \frac{\sqrt{s-\tau+2a}-\sqrt{a}}{\sqrt{a}(\sqrt{2}-1)}\right]
+ [c_1''\sqrt{\tau - a}  ] \\
&\ge - 2\varepsilon + c_1'(\sqrt{s-\tau + 2a}-\sqrt{a})+c_1''\sqrt{\tau - a} \ge c_1\sqrt{s}.
\end{align*}
The last inequality holds for $s=\tau - a$ by \eqref{fourth}. Since the derivative of the left hand side with respect to $s$ is greater than that of the right hand side for $s \ge \tau - a$, the last inequality holds for all $s\in [\tau-a,\tau]$. Next, we consider the case $s \in [\tau,t]$. We claim that
\begin{align*}
B(t-s)-B(t)&\ge X^*+c_1'\sqrt{2a-\tau+s}-B(t)\\
&\ge B(U+a)-c_1'\sqrt{a}-\varepsilon + c_1'\sqrt{2a-\tau+s}-B(t)\\
&\ge c_1''\sqrt{\tau -a}-c_1'\sqrt{a}-\varepsilon+  c_1'\sqrt{2a-\tau+s}\\
&\ge c_1\sqrt{s}.
\end{align*}
The first inequality follows from the definition of $X^*$. The second inequality follows from 
the condition in \eqref{erstens} applied with $s=a$. The third inequality follows from \eqref{zweitens} applied with $s=\tau - a$. The last inequality holds for $s=\tau$ by \eqref{fifth}. It holds for $s\ge \tau$ because the derivative of the left hand side is greater than that of the right hand side for $s\ge \tau$.
The following inequalities hold for similar reasons,
\begin{align*}
B(t-s)-B(t)&\le X^*+h_u(2a-\tau+s)-B(t)\\
&\le B(U+a)-c_1'\sqrt{a}+\varepsilon +h_u(2a-\tau+s) -B(t)\\
&\le c_1''\sqrt{\tau -a}+\phi-c_1'\sqrt{a}+\varepsilon+ h_u(2a-\tau+s)\\   
&\le c_1''\sqrt{a+\hat \varepsilon}+\phi-c_1'\sqrt{a}+\varepsilon+c_2'\sqrt{s}\\
&\le c_2\sqrt{s}.   
\end{align*}
The last inequality holds by \eqref{third} since $s \ge \tau \ge 2a$. Finally, for $s=t$, we obtain in the same way
\begin{align*}
B(t-s)-B(t)&\le  c_1''\sqrt{\tau -a}+\phi-c_1'\sqrt{a}+\varepsilon+ h_u(2a-\tau+t)\\ 
&\le  c_1''\sqrt{\tau -a}+\phi-c_1'\sqrt{a}+\varepsilon+  c_1'\sqrt{2a-\tau+t}+\kappa\sqrt{u/2}\\
&\le  c_1''\sqrt{a+\hat \varepsilon}+\phi-c_1'\sqrt{a}+\varepsilon+  c_1'\sqrt{t}+\kappa\sqrt{t}\\
&\le(c_1+\delta)\sqrt{t}.
\end{align*}
The last inequality can be derived from
\eqref{third} and the following facts: $c'_1 - c_1 \leq \delta/2$ and $\kappa = \delta/2$.

We have verified that all conditions in the definition of $G_2$ hold.
Therefore
$G_1 \subseteq G_2$ and the proof that $\P(G_2)\ge p_1 p_2 p_3$ (for all $u > 4a$) is complete.

The rest of the proof of \eqref{d27.5} is analogous to the argument showing that \eqref{d25.1} 
implies \eqref{d25.2} in the case $c_1 \leq 0 < c_2$ and we therefore omit it.

We will construct a decomposable FBM $X$, using the notation as in Definition \ref{d17.3}.
Let $\delta,\phi$ be strictly positive numbers such that $\delta<\phi \le 1/4$. Suppose that
$\{B^k_t, t\geq 0\}$, $k\in \Z  $, are independent Brownian motions and let $a_k\ge 1, \,k <0$ be numbers which we will specify later. 
Recall all the conditions that we imposed on $c_1, c_2, c_1''$, etc. in this part of the proof.
Let $T_k \equiv 1$ for $k\in \N_0$ and for $-k \in \N$ define
\begin{align}\label{d29.4}
T_{k} &:= 
\inf\{t \geq a_{k+1}:
B^{k}(0)-B^{k}(t) \in [c_1 \sqrt{t} , (c_1+\delta)\sqrt{t}]  \\
&\text{  and  }
B^{k}(t-s)-B^{k}(t) \in \left[c_1'' \sqrt{s},\,c_2 \sqrt{s}\land\left(c_1'' \sqrt{s} + \phi\right)\right]\  \forall s \in [0,a_{k+1}]\label{d30.2} \\
&\text{  and  }
B^{k}(t-s)-B^{k}(t) \in \left[c_1 \sqrt{s},\,c'_2 \sqrt{s} \right]\  \forall s \in [a_{k+1},t]\},\label{d30.3}
\end{align}
and note that $T_{k} < \infty$ a.s., by \eqref{d27.5}. By construction, the associated FBM $X$ satisfies $X_s \geq c_1\sqrt{|s|}$ for all $s \le 0$.

It remains to show that if the $a_k$ are suitably defined then we also have $\limsup_{t \to -\infty} X_t/\sqrt{|t|}\le c_2$ almost surely.

We will define $a_k$ for $k\leq 0$ inductively starting with $a_0=1$.
Suppose that the $a_{j}$'s have been defined for all $j> k$ for some $k< 0$. This determines $S_k$ and $X_t, \, t \ge S_k$. 
Let $V_k>0$ and $-\infty< R_k <-1$ 
be such that $\P(|X_{S_{k}}| \geq V_k \  \text{  or  }\ S_{k} <R_k) \leq 2^{k}$. 
Then we fix $ a_k\in(-R_k,\infty)$ such that for all $t_1\in[R_k, 0]$
and $t\leq t_1 -a_k$,  
\begin{align}\label{d29.3}
V_k + c'_2\sqrt{|t-t_1|} \leq c_2 \sqrt{|t|}\qquad \mbox{ and }\qquad V_k+ (c_1+\delta) \sqrt{|t-t_1|}
 \leq (c_1''+\phi)\sqrt{|t|}. 
\end{align}

Since $\sum_{k\leq 1} 2^{k} < \infty$, we see that, a.s., there exists a (random) $k_*\leq -1$ such that $|X_{S_{k}}| \leq V_k$ and 
$S_{k} \in [R_k, -1]$ for all $k\leq k_*$. 
If $X_{S_k}\le V_k$ and $S_k \geq R_k$ for some $-k \in \N$, then it follows from \eqref{d29.3}  and \eqref{d30.3}  that for $t \in [S_{k-1},S_k-a_k]$, we have
\begin{align}\label{29.5}
X_t \leq X_{S_k} + c'_2 \sqrt{S_k-t} \leq V_k +  c'_2 \sqrt{S_k-t}  \leq  c_2  \sqrt{|t|}. 
\end{align}
Further, if $X_{S_k}\le V_k$ and $S_k \geq R_k$  for some $-k \in \N$, then it follows from \eqref{d29.3} and \eqref{d29.4} that 
\begin{align}\label{ganzneu}
X_{S_{k-1}} \leq X_{S_k} + (c_1+\delta)\sqrt{S_k-S_{k-1} } \le V_k+ (c_1+\delta)\sqrt{S_k-S_{k-1} }\leq  (c_1''+\phi)\sqrt{|S_{k-1}| }  
\end{align}
and, for $t \in [S_{k-1}-a_{k-1},S_{k-1}]$, using \eqref{ganzneu}, \eqref{d30.2}, $c_1''\le 1, \,c_2 \ge 2$, the elementary inequality 
$\frac 32\sqrt{a}+\sqrt{b}\le 2 \sqrt{a+b}$ for $a,b \ge 0$       and $\phi \leq 1/4 \leq \sqrt{|S_{k-1}|}/4$      we have
\begin{align*}
X_t &\le X_{S_{k-1}}+(c_2\sqrt{S_{k-1}-t})\wedge(c_1''\sqrt{S_{k-1}-t}+\phi)\\
&\le (c_1''+\phi+\frac 14) \sqrt{|S_{k-1}|} +c_1'' \sqrt{S_{k-1}-t}       \leq c_2 \sqrt{|t|}. 
\end{align*}

Thus we have shown that $X_t \leq c_2\sqrt{|t|}$ holds for all 
$t\in (-\infty, S_{k_*-1}]$.
This completes the proof of part (ii) in the case $\lambda_0( c_1,c_2)<1$.

It remains to prove part (ii) in the case $0<c_1<c_2 \le \infty$ when $\lambda(c_1,c_2)=1$. In this case we proceed as above except that we replace $c_1$ in the definition 
of $T_k$ by $c_{1,k}$ such that $c_{1,k}$ approaches $c_1$ from below as $k \to -\infty$. This requires to let also $c_1''$ and $c_2'$ (but not $c_2$) depend on $k$. We leave the details 
to the reader.

This completes the proof of the theorem.
\end{proof}

In a particular case, we can construct an FBM which always lies above a parabolic boundary, and which is even {\em strongly} decomposable. 

\begin{proposition}
For each $\ve>0$, there exists a strongly decomposable FBM $X$ such that
$$
\inf_{t <0} {X_t}/{\sqrt{|t|}} \ge 1-\ve, \mbox{ a.s.}
$$
\end{proposition}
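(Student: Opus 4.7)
Fix $c\in(1-\varepsilon,1)$; the plan is to construct $X$ as a strongly decomposable FBM whose pieces are Brownian motions stopped at a global one-sided slow-time. Let $\{B^k\}_{k\in\Z}$ be independent standard Brownian motions on filtrations $\F^k$, and set
\begin{equation*}
T_k:=\inf\bigl\{t\ge 1:B^k(t-s)-B^k(t)\ge c\sqrt{s}\text{ for all }s\in[0,t]\bigr\}.
\end{equation*}
By path continuity, the set in braces is closed in $t$, so $T_k$ is an $\F^k$-stopping time; since the same rule is applied to i.i.d.\ $B^k$'s, the pairs $(T_k,B^k|_{[0,T_k]})$ are i.i.d. As $T_k\ge 1$ we have $\sum T_k=\infty$, so once $T_k<\infty$ a.s., the process $X$ of Definition \ref{d17.3} is a strongly decomposable FBM.

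Assuming $T_k<\infty$ a.s., I verify $X_t\ge c\sqrt{|t|}$ for every $t\le 0$, which implies the claim since $c>1-\varepsilon$. First induct on $-k$ that $X_{S_k}\ge c\sqrt{|S_k|}$: the base case $X_{S_0}=0$ is trivial, and setting $s=T_k$ in the defining condition of $T_k$ yields $B^k(T_k)\le-c\sqrt{T_k}$, so
\begin{equation*}
X_{S_k}=X_{S_{k+1}}-B^k(T_k)\ge c\sqrt{|S_{k+1}|}+c\sqrt{T_k}\ge c\sqrt{|S_{k+1}|+T_k}=c\sqrt{|S_k|}.
\end{equation*}
For an interior time $t=S_{k+1}-u$ with $u\in(0,T_k)$, one has $|t|=|S_{k+1}|+u$, and the defining condition combined with the inductive bound gives
\begin{equation*}
X_t=X_{S_{k+1}}+[B^k(T_k-u)-B^k(T_k)]\ge c\sqrt{|S_{k+1}|}+c\sqrt{u}\ge c\sqrt{|t|}.
\end{equation*}

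The main obstacle is showing $T_k<\infty$ a.s.; this is a global (not merely local) one-sided slow-time existence statement for Brownian motion. It follows from the identity \eqref{d27.5} established in the course of proving Theorem \ref{d30.5}(ii). Taking $c_1=c$, one chooses $c_1''\in(c,1)$ and $c_2>2$ large enough that $\lambda_0(c_1'',c_2)<1$ --- which is possible because $\lambda_0(c,\infty)<1$ by Remarks \ref{d23.2} and \ref{d27.1}, combined with continuity and monotonicity of $\lambda_0$ --- and picks $\delta,\phi,a,\hat\varepsilon$ meeting the constraints there; then the event displayed in \eqref{d27.5} occurs for some $t\ge 1$ with probability one. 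Since that event includes $B(0)-B(t)\ge c\sqrt{t}$, $B(t-s)-B(t)\ge c\sqrt{s}$ for $s\in[a,t]$, and $B(t-s)-B(t)\ge c_1''\sqrt{s}\ge c\sqrt{s}$ for $s\in[0,a]$, it implies our one-sided condition, so such a $t\ge 1$ exists a.s.\ and $T_k<\infty$ a.s., completing the construction.
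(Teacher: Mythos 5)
Your proof is correct and follows essentially the same route as the paper's: the same stopping time $T_k$ (the first $t\ge 1$ at which a one-sided backward parabolic slow point occurs), the same strongly decomposable construction, and the same telescoping estimate via $\sqrt{x}+\sqrt{y}\ge\sqrt{x+y}$. The only substantive difference is how you justify $T_k<\infty$ a.s.: the paper invokes \eqref{d25.2} with lower slope $1-\ve$, while you invoke \eqref{d27.5}. Your choice is arguably the more careful one, since \eqref{d25.1}--\eqref{d25.2} are derived in the paper only under the hypothesis $c_1\le 0<c_2$ (and the event in \eqref{d25.1} is actually empty when $c_1>0$ and $\delta<2c_1$), whereas your lower slope $c\in(1-\ve,1)$ is positive, which is precisely the regime that \eqref{d27.5} was developed to handle.
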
 

\begin{proof}
Fix an arbitrarily small $\eps >0$.
Let $B$ be standard Brownian motion and
$$
T:=\inf\{t \ge 1: B(s) \ge B(t) + (1-\eps)\sqrt{t-s} \mbox{ for all } s \in [0,t]\}.
$$
Since $\lambda_0(1,\infty)=1$,
it follows from \eqref{d25.2} that
$\P(T< \infty)=1$. 
Let $X$ be the strongly decomposable FBM based on i.i.d. sequence
$(B^k,\, T_k)$, with elements distributed
as $(B,T)$ above and $X_0=0$. 
For $S_{n-1}\le t\le S_n$, $-n \in \N_0$, a.s.,
\begin{eqnarray*}
X_t&\ge& X_{S_n}+(1-\ve)\sqrt{S_n-t}\\
&\ge& X_{S_{n+1}} +(1-\ve)\sqrt{S_{n+1}-S_n}+ (1-\ve)\sqrt{S_n-t}\\
&\dots \\
&\ge&(1-\ve)\left( \sum_{k=n}^{-1} \sqrt{ S_{k+1}-S_k} +\sqrt{S_n-t} \right) \ge (1-\ve)\sqrt{-t}.
\end{eqnarray*}
\end{proof}

\section{A sufficient condition for FBM to be 2BM}\label{sec:2BM}

Two sided Brownian motion (2BM) is the most generic example of FBM but it is far from being a unique example of FBM, as the previous sections show. It is natural to ask what extra assumptions 
on an FBM make it necessarily 2BM.
We will present a sufficient condition for this to be true. We will also show that some other ``similar'' conditions fail to force an FBM to be 2BM.

Recall the notation used in  Definition \ref{d17.3}.

\begin{theorem}\label{d20.1}
If $X$ is strongly decomposable and $\E T_k < \infty$
then $X$ is 2BM.
\end{theorem}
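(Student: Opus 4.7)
Set $\mu:=\E T_0\in(0,\infty)$. The goal is to produce a random time $S$ (possibly on an enlarged probability space) satisfying Definition \ref{d17.4}, i.e.\ such that $\{X_{S+t}-X_S,\,t\ge 0\}$ and $\{X_{S-t}-X_S,\,t\ge 0\}$ are independent standard Brownian motions.

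Three ingredients drive the plan. First, by i.i.d.-ness of the pieces and strong Markov, for each $n\ge 1$ the process
$$W^{(n)}(u):=X_{S_{-n}+u}-X_{S_{-n}},\qquad u\ge 0,$$
is a standard Brownian motion, and $\sigma_n:=-S_{-n}$ is an $n$-th iterate of the $T_0$-stopping functional read off $W^{(n)}$. Second, since $\E T_0<\infty$, the strong law of large numbers gives $\sigma_n/n\to\mu$ almost surely; this is the only place the hypothesis enters, and it is precisely what fails in Example \ref{d17.1}, where instead $\sigma_n\sim n^2$. Third, the time-reversal property of Brownian motion: for a standard BM $B$ and any time $a>0$ that is either deterministic or independent of $B$, the processes $\{B(a+t)-B(a),\,t\ge 0\}$ and $\{B(a-s)-B(a),\,s\in[0,a]\}$ are independent standard Brownian motions on their respective intervals (forward by the Markov property, backward by BM time-reversal on $[0,a]$, independence by the Markov property at $a$).

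With these in hand, I would enlarge the probability space by $U\sim\mathrm{Unif}(0,1)$ independent of $(T_k,B^k)_{k\in\Z}$, set $a_n:=U\sigma_n$ (independent of $W^{(n)}$), and apply the time-reversal ingredient inside $W^{(n)}$ at the time $a_n$. Translating to $X$ via $V_n:=S_{-n}+a_n=(U-1)\sigma_n$, the process
$$Y^{(n)}_t:=X_{V_n+t}-X_{V_n},\qquad t\in[-a_n,\infty),$$
has the law of 2BM(0) restricted to $[-a_n,\infty)$. Because the SLLN forces $a_n\to\infty$ almost surely, on any compact window $[-M,M]$ the restriction $Y^{(n)}|_{[-M,M]}$ is, for all sufficiently large $n$, distributed as 2BM(0)$|_{[-M,M]}$.

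The final step is to upgrade this ``2BM on growing compacts'' statement to existence of a single random $S$ such that $\{X_{S+t}-X_S,\,t\in\R\}$ is itself a 2BM(0). I would invoke Palm inversion for renewal processes, which is valid precisely because $\mu<\infty$: the renewal point process $\{S_k\}_{k\in\Z}$ has intensity $1/\mu$ on $\R$, and the Palm-stationary shift, obtained by size-biasing the renewal interval containing the origin and using $U$ to place the origin uniformly inside it, furnishes a random $S$ measurably dependent on $X$ and $U$. Consistency of finite-dimensional distributions then identifies $\{X_{S+t}-X_S,\,t\in\R\}$ as 2BM(0), which is what Definition \ref{d17.4} requires.

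The main anticipated obstacle is this last identification: the shifts $V_n=(U-1)\sigma_n$ diverge pathwise, so $S$ cannot be taken as a naive pathwise limit of $V_n$. The Palm-inversion machinery is needed to coherently define $S$ as a genuine random variable on the enlarged probability space, rather than only as a distributional limit of compact-window marginals.
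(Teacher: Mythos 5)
Your overall architecture (recenter $X$ at a suitably randomized time, observe an exact two-sided Brownian identity on a growing window, then use Palm inversion for the renewal process $\{S_k\}$, which is indeed where $\E T_k<\infty$ genuinely enters) is the same as the paper's. But your key intermediate claim is false as stated. You set $a_n:=U\sigma_n$ with $\sigma_n=-S_{-n}$ and assert that $a_n$ is independent of $W^{(n)}$, hence that $Y^{(n)}$ is distributed as 2BM(0) on $[-a_n,\infty)$. It is not: $\sigma_n=T_{-n}+\dots+T_{-1}$ is a stopping time of the concatenated path $W^{(n)}$ and in general heavily dependent on it, so $U\sigma_n$ is path-dependent and the time-reversal/independence lemma you invoke (which requires the recentering time to be deterministic or independent of the path) does not apply. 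Concretely, take $n=1$ and $T_{-1}=\inf\{t: B^{-1}_t=-1\}$ as in Example \ref{d17.1}: the backward process from $U\sigma_1$ is confined above the random level $-1-W^{(1)}(U\sigma_1)$, which is exactly the minimum attained by the forward process on $[0,(1-U)\sigma_1]$; the two halves are therefore strongly dependent and neither is a Brownian motion. Your subsequent statement that $Y^{(n)}|_{[-M,M]}$ is \emph{exactly} distributed as 2BM(0) for all large $n$ inherits this error; at best one could hope for convergence in distribution, which is precisely what must be proved rather than assumed.

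The paper circumvents this by recentering at $U_a$ uniform on a \emph{deterministic} window $[a,2a]$, independent of $X$: for every fixed $s\in[a,2a]$ one has $s-a\ge 0$, and since $\{X_t,\,t\ge 0\}$ is an honest Brownian motion, $\{X_{s+t}-X_s,\,-a\le t\le a\}$ really is 2BM(0) on $[-a,a]$; averaging over the independent $U_a$ preserves this. The passage to a single shift $\Theta$ is then done by Kallenberg's Palm inversion (Lemma 11.7 and Theorem 11.8 of \cite{Kall}): the law of $\{S_n-U_a\}$ converges in total variation to the stationary law of $\{S_n-\Theta\}$, and because the conditional law of the path given the renewal sequence is the same functional in both cases, the recentered path laws converge in total variation as well, transporting the exact 2BM(0) identity on each $[-b,b]$ to the limit. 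If you replace your path-dependent $a_n=U\sigma_n$ by this deterministic-window randomization and supply the total-variation transfer step, your plan becomes the paper's proof.
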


\begin{proof} 
Assume that $X$ is strongly decomposable and $\E T_k < \infty$ for the $T_k$'s introduced in  Definition \ref{d17.3}.
We will assume without loss of generality that $X_0=S_0=0$.

According to \cite[Lemma 11.7]{Kall} (see also Theorem 11.4 in \cite{Kall} or Sections 4.1-4.2 and 8.1-8.2 in \cite{Thor}), there exists a random variable $\Theta$ such that the distribution of   
$\{S^*_n , n \in \Z\}:=\{S_n - \Theta, n \in \Z\}$ is stationary. 
Moreover, we can and will choose $\Theta$ so that it may depend on $\{S_n\}_{n\in \Z}$ but does not depend on $\{X_t\}_{t\in\R}$ in any other way.
It will suffice to show that the distribution of $\{X^*_t, t\in\R\} := \{X_{t+\Theta} - X_\Theta, t\in\R\}$ is 2BM(0).

Suppose that $a>0$, let $U_a$ be a uniform random variable on $[a, 2a]$, independent of $X$, and let $\{S^a_n , n \in \Z\}=\{S_n - U_a, n \in \Z\}$. Then it follows from \cite[Thm. 11.8 (i)]{Kall} that the
distributions of $\{S^a_n , n \in \Z\}$ converge to the distribution of $\{S^*_n , n \in \Z\}$ in the total variation norm, as $a\to\infty$. 
Let $X^a_t = X_{t+U_a} - X_{U_a}$ for $ t\in\R$.

The conditional distribution of $X$ given $\{S_n, n\in \Z\}$ can be described as follows.
Suppose that $\bs=\{s_n, n\in \Z\}$ is a deterministic sequence of real numbers such that $s_n < s_{n+1}$ for all $n$, $\lim_{n\to - \infty} s_n = -\infty$ and $\lim_{n\to  \infty} s_n = \infty$. 
Let $Q$ be the distribution of a pair $(T_k, B^k)$ used in the construction of the strongly decomposable process $X$ (note that $Q$ does not depend on $k$).
Let $Q_t$ be the distribution $Q$ conditioned by $\{T_k = t\}$ and let $\wt Q_t$ be the distribution of the second element in the pair (stochastic process) under $Q_t$, stopped at $t$. Let $\{\wt B^n, n\in \Z\}$ be independent processes, such that the distribution of $\wt B^n$ is $\wt Q_{s_{n+1} - s_n}$ for all $n$. Let $\wt X$ be the unique continuous process such that $\wt X_{t+s_n} - \wt X_{s_n} = \wt B^n _t $ for all $t\in [0, s_{n+1}-s_n)$ and all $n\in\Z$. Let $\cD(\bs)$ denote the distribution of $\wt X$. Then the distribution of $X$ is $\cD(\{S_n, n\in \Z\})$. Similarly, the distributions of $X^*$ and $X^a$ are $\cD_* :=\cD(\{S^*_n, n\in \Z\})$ and $\cD_a :=\cD(\{S^a_n, n\in \Z\})$, resp.
In other words, the conditional distributions of $X^*$ and $X^a$ given $\{S^*_n, n\in \Z\}$ and $\{S^a_n, n\in \Z\}$, resp., are identical.
Since the distribution of $\{S^a_n , n \in \Z\}$ converges to the distribution of $\{S^*_n , n \in \Z\}$ in the total variation norm, $\cD_a$ converge to $\cD_*$ in the total variation norm, as $a\to\infty$.

It follows from the definition of a decomposable FBM that $\{X_t, t\geq 0\}$ is standard Brownian motion. Hence, for every fixed $s\in[a,2a]$, the distribution of $\{X_{t+s} - X_s, -a \leq t \leq a\}$ is that of 2BM(0) with time restricted to the interval $[-a,a]$. Since $U_a$ is independent of $X$, the distribution of $\{X^a_t, -a \leq t \leq a\}$ is also that of 2BM(0) restricted to $[-a,a]$. This in turn implies that for any fixed $b>0$ and all $a\geq b$, the distribution of $\{X^a_t, -b \leq t \leq b\}$ is that of 2BM(0) restricted to $[-b,b]$.
In other words, for any fixed $b>0$ and all $a\geq b$, the distribution $\cD_a$ restricted to $[-b,b]$ is that of 2BM(0). Since $\cD_a$ converges to $\cD_*$ in the total variation norm, as $a\to\infty$, we conclude that for any fixed $b>0$, the distribution $\cD_*$ restricted to $[-b,b]$ is that of 2BM(0). The constant $b>0$ is arbitrarily large so the distribution $\cD_*$ is that of 2BM(0) on the whole real line.
\end{proof}

We will show that the result in Theorem \ref{d20.1} is optimal, in a sense. First, we will show that 
the conclusion of Theorem \ref{d20.1} does not necessarily hold if the assumption $\E T_k <\infty$ is replaced by the condition $\E T_k^\alpha <\infty$ for some $\alpha\in (0,1)$. Next, we will show that if $T_k$'s are not i.i.d. then the condition $\sup_k \E T_k <\infty$ does not guarantee that the corresponding FBM is 2BM. Moreover, even if $\sup_k \E T_k^\alpha <\infty$ for some $\alpha < \infty$, the FBM is not necessarily 2BM.

\begin{theorem}
For any $\alpha\in (0,1)$,
there exists a strongly decomposable FBM $X$ satisfying $\E T_k^\alpha <\infty$ which is not a BBM.
\end{theorem}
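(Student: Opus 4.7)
The idea is to produce a strongly decomposable FBM whose backward law of the iterated logarithm (LIL) differs from the Brownian value $1$. This suffices: if $X$ were BBM, then by definition there would exist a random time $\sigma$ such that $\{X_{\sigma-t}-X_\sigma,\,t\ge 0\}$ is a standard Brownian motion, and the classical LIL would then force $\limsup_{t\to-\infty}|X_t|/\sqrt{2|t|\log\log|t|}=1$ a.s. So our goal is to arrange for this limsup to be $0$ or $\infty$.

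For $\alpha\in(0,1/2)$ the construction of Example~\ref{d17.1} already works: $T_k=\inf\{t\ge 0:B^k_t=-1\}$ has $\P(T_k>t)\sim c\,t^{-1/2}$, hence $\E T_k^\alpha<\infty$, and we saw there that $X_t\to+\infty$ a.s.\ as $t\to-\infty$, so the limsup above is $\infty\ne 1$ and $X$ is not BBM. For $\alpha\in[1/2,1)$ this direct approach fails, because $\E T_k^\alpha<\infty$ now implies $\E T_k^{1/2}<\infty$ and then the Burkholder--Davis--Gundy inequality gives $\E B^k_{T_k}=0$, so the skeleton walk $X_{S_k}$ is centered and cannot drift. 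In this range I would use a Skorokhod embedding: fix $r\in(2\alpha,2)$ and take a random variable $X_0$ with $\P(X_0=-1)=1-q$ and an absolutely continuous positive part satisfying $\P(X_0>y)\asymp y^{-r}$, with $q$ chosen so that $\E X_0=0$. Then $\E|X_0|^{2\alpha}<\infty$ and $\E X_0^2=\infty$. The Az\'ema--Yor embedding produces a stopping time $T$ of a Brownian motion $B$ (with respect to a filtration of the kind permitted by Definition~\ref{d17.3}) such that $B_T\sim X_0$ and, via the BDG-type bound $\E T^\alpha\le C\,\E|X_0|^{2\alpha}$, one has $\E T^\alpha<\infty$; meanwhile $\E T=\E X_0^2=\infty$, so Theorem~\ref{d20.1} does not force $X$ to be 2BM. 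Taking iid copies $(T_k,B^k)$ of $(T,B)$ yields the desired strongly decomposable $X$.

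To verify that $X$ is not BBM in this second case, one computes the backward LIL of $X$. The skeleton walk $X_{S_{-n}}=-\sum_{j=1}^n X_0^{(j)}$ is in the domain of attraction of an $r$-stable law with $r\in(1,2)$, so by the stable-walk LIL (Heyde--Mijnheer), $\limsup_n |X_{S_{-n}}|/\bigl(n^{1/r}(\log\log n)^{(r-1)/r}\bigr)\in(0,\infty)$ a.s. Simultaneously, $|S_{-n}|=\sum_{j=1}^n T_{-j}$ has order $n^{2/r}$ a.s.\ by one-sided stable summation for the $T_{-j}$'s, so $\sqrt{|S_{-n}|\log\log|S_{-n}|}\sim n^{1/r}\sqrt{\log\log n}$. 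Since $(r-1)/r-1/2<0$ whenever $r<2$, these two rates combine to give $\limsup_n |X_{S_{-n}}|/\sqrt{|S_{-n}|\log\log|S_{-n}|}=0$ a.s. The Brownian fluctuations on each piece $[S_{-n-1},S_{-n}]$ are $O(\sqrt{T_{-n-1}})$, of even smaller order, so $\limsup_{t\to-\infty}|X_t|/\sqrt{2|t|\log\log|t|}=0$ a.s., contradicting the value $1$ that BBM would require; hence $X$ is not BBM.

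The main obstacle will be the combination of the stable-walk LIL with the stable scaling for the sums of the $T_{-j}$'s, together with the careful transfer from the discrete skeleton $\{S_{-n}\}$ to arbitrary continuous $t<0$. The Skorokhod embedding with its BDG-type moment bound is classical and can be invoked as a black box.
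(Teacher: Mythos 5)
Your reduction to the backward law of the iterated logarithm is legitimate, and the case $\alpha\in(0,1/2)$ is essentially Example \ref{d17.1} (one small slip: for that example $\limsup_{t\to-\infty}X_t/\sqrt{2|t|\log\log|t|}$ equals $1$, not $\infty$, since the three--dimensional Bessel process obeys the same upper LIL as Brownian motion; it is the fact that $X_t\to+\infty$, equivalently that the liminf is $0$ rather than $-1$, that rules out BBM). The genuine gap is in the main case $\alpha\in[1/2,1)$. Your rate comparison rests on the assertion that $|S_{-n}|=\sum_{j=1}^{n}T_{-j}$ ``has order $n^{2/r}$ a.s.'', so that $\sqrt{|S_{-n}|\log\log|S_{-n}|}\sim n^{1/r}\sqrt{\log\log n}$. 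But $T$ has tail index $r/2<1$ and infinite mean, so $n^{-2/r}|S_{-n}|$ converges only in distribution; almost surely $\liminf_n |S_{-n}|/n^{2/r}=0$, and the correct lower envelope (Fristedt--Pruitt type liminf for sums attracted to an $(r/2)$-stable subordinator) is $|S_{-n}|\asymp n^{2/r}(\log\log n)^{-(2-r)/r}$ infinitely often. Plugging this into the denominator gives $\sqrt{|S_{-n}|\log\log|S_{-n}|}\asymp n^{1/r}(\log\log n)^{(r-1)/r}$ along the dips --- exactly the same order as the one-sided stable LIL rate you quote for the numerator. The two rates therefore do not separate: whether the limsup of the ratio is $0$, lies in $(0,1)$, or equals $1$ depends on the precise stable constants and, worse, on the joint behaviour of the times at which $-\sum_j\xi_j$ peaks and $\sum_j T_{-j}$ dips, none of which you control. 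Two further problems: on the heavy-tailed side the normalization $n^{1/r}(\log\log n)^{(r-1)/r}$ yields limsup $+\infty$ (single big jumps), not a finite constant, so that side must instead be handled via the correlation between a large $\xi_j$ and a large $T_{-j}$; and the interpolation step fails as stated, because a single $T_{-j}$ is comparable to the whole sum $|S_{-j}|$ infinitely often, so the within-piece oscillation is not of smaller order.

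For contrast, the paper's proof avoids the LIL altogether. It chooses $c_1<1$ with $\lambda_0(c_1,\infty)=(1+\alpha)/2$ and sets $T_k=\inf\{t\ge 0: B^k_t\le -1+c_1\sqrt{t}\}$, so that $\P(T_k\ge t)\le Kt^{-(1+\alpha)/2}$ gives $\E T_k^\alpha<\infty$ while every piece lies above a square-root barrier anchored at its right endpoint. Because the tail index is below one, a single piece occupies a macroscopic fraction of $[S_{-n},0]$ with probability bounded away from zero, and the asymmetry of that parabolic barrier under the piecewise $180^\circ$ rotation (which converts the rescaled backward path into a forward Brownian motion) prevents the rescaled backward process from converging to Brownian motion, contradicting the existence of any $S$ with $\{X_{S-t}-X_S,\,t\ge 0\}$ Brownian. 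To salvage your route you would need either a construction in which the numerator and denominator rates genuinely separate, or a direct pathwise asymmetry of this kind; the Skorokhod-embedding setup by itself does not deliver the conclusion.
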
 

\begin{proof}
Fix any $\alpha\in (0,1)$ and find
$c_1 < 1$ such that $\lambda_0(c_1,\infty) = (1+\alpha)/2$. Let 
\begin{align*}
T_k = \inf\{t\geq 0: B^k_t \leq -1 + c_1\sqrt{t}\}.
\end{align*}
It follows from \cite[Lem.~10(b)]{Perkins} that
\begin{align*}
\P(T_k \geq t) =
\P\{B^k_u \geq -1+ c_1\sqrt{u}\ \ \forall 0 \le u \le t\}
\leq K  t^{-\lambda_0(c_1,\infty)}
= K  t^{-(1+\alpha)/2},
\end{align*} 
where $K >0$. This implies that 
$\E T_k^\alpha <\infty$.

Suppose that there is a random variable $S$ such that $\{X_{S-t} - X_S, t\geq 0\}$ is Brownian motion. We will show that this assumption leads to a contradiction.

Recall that $X_0=0$. For $n\geq 1$ and $t\geq 0$, let
\begin{align*}
X^{S,n}_t = \frac1{\sqrt{n}} (X_{S-nt} - X_S),
\qquad
X^{0,n}_t = \frac1{\sqrt{n}} X_{-nt} ,
\qquad
X^n_t = -\frac1{\sqrt{n}} X_{nt} .
\end{align*}

It is easy to see that for any random variable $S$ and continuous process $X$, the sequence of processes $\{X^{S,n}_t, t\geq 0\}$ converges to Brownian motion in the Skorokhod topology if and only if $\{X^{0,n}_t, t\geq 0\}$ converges to Brownian motion.
We have assumed that $\{X_{S-t} - X_S, t\geq 0\}$ is Brownian motion so $\{X^{S,n}_t, t\geq 0\}$ is Brownian motion for every $n$. Hence, to complete the proof, it will suffice to show that $\{X^{0,n}_t, t\geq 0\}$ does not converge to Brownian motion.

Recall $S_k$'s from Definition \ref{d17.3} and let $S^0_k = - S_{-k}$.
For $t\in [S^0_k/n, S^0_{k+1}/n]$, $k\geq 0$, let
\begin{align*}
X^{+,n}_{t}= - X^{0,n}_{-t+S^0_k/n+S^0_{k+1}/n}
+X^{0,n}_{S^0_{k+1}/n}
+X^{0,n}_{S^0_k/n}.
\end{align*}
The process $X^{+,n}$ is obtained from the process $X^{0,n}$ by rotating every piece of the trajectory 
between $S^0_k/n $ and $ S^0_{k+1}/n$ by 180 degrees and matching the endpoints
of the rotated path with the original locations of the endpoints. It is easy to see that the distribution of $\{X^{+,n}_t, t\geq 0\}$ is the same as that of
$\{X^n_t, t\geq 0\}$. Hence, $\{X^{+,n}_t, t\geq 0\}$ is Brownian motion.
It will be enough to show that 
 $\{X^{0,n}_t, t\geq 0\}$ and $\{X^{+,n}_t, t\geq 0\}$
do not converge to the same limit, in distribution.

Assume to the contrary that $\{X^{0,n}_t, t\geq 0\}$ and $\{X^{+,n}_t, t\geq 0\}$
converge to the same limit, in distribution. The limit must be
Brownian motion. Since each sequence is tight, the sequence of pairs
$\{(X^{0,n}_t, X^{+,n}_t), t\geq 0\}$ is also tight.
Therefore, it contains a convergent subsequence.
By abuse of notation, we will assume that the whole
sequence converges in distribution. Let the weak limit be called
$\{(X^{0,\infty}_t, X^{+,\infty}_t), t\geq 0\}$.

Note that since $B^k_t \geq -1$ for $t\in[0, T_k]$, we have
$X^{+,n}_t - X^{0,n}_t \leq \frac1{\sqrt{n}} $ for $t\geq 0$, a.s.
This implies that 
$X^{+,\infty}_t - X^{0,\infty}_t \leq 0 $ for $t\geq 0$, a.s.

It follows from \cite[Lem.~10]{Perkins} that
\begin{align*}
\P(T_k \geq t) 
\sim K_1  t^{-\lambda_0(c_1,\infty)}
= K_1  t^{-(1+\alpha)/2},
\end{align*} 
where $K_1 >0$. Since $\alpha\in(0,1)$, 
standards results for sums of heavy tailed random variables (see, e.g., 
\cite[Thm.~5.1]{Dar} or
\cite[Sect.~10.5, p.~150]{KorSin}) show that the size of $\max_{1\leq k \leq n} T_k$ is comparable to $S_n$ with positive probability. More precisely, for some $p_1 >0$ and $\beta\in(0,1/2)$,
for every $n\geq 1$, with probability greater than $p_1$, there exists
$k$ such that the following event holds,
$A=\{0\leq S^0_k/n< S^0_k/n +\beta < S^0_{k+1}/n \leq 1\}$. If $A$ holds then
\begin{align*}
X^{0,n}(S^0_k/n +\beta/2) - X^{+,n}(S^0_k/n +\beta/2) \geq 
c_1 \sqrt{\beta/2} - c_1 \sqrt{\beta}/2 - \frac1{\sqrt{n}}.
\end{align*}
This and the assumption that the limits $X^{0,\infty}$ and $ X^{+,\infty}$ are continuous processes imply that with probability greater than $p_1$, there exists
$t\in[0,1]$ such that 
\begin{align*}
X^{0,\infty}_t - X^{+,\infty}_t \geq 
c_1 \sqrt{\beta/2} - c_1 \sqrt{\beta}/2 .
\end{align*}
This and the fact that 
$X^{+,\infty}_t - X^{0,\infty}_t \leq 0 $ for $t\geq 0$, a.s.,
imply that the processes  $\{X^{0,n}_t, t\geq 0\}$ and $\{X^{+,n}_t, t\geq 0\}$
do not converge to the same process with continuous paths, in distribution.
\end{proof}

\begin{theorem}
For any $\alpha\in (0,\infty)$,
there exists a  decomposable FBM $X$ satisfying $\sup_i \E T_i^\alpha <\infty$ which is not a BBM.
\end{theorem}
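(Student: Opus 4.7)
The new feature compared to the previous theorem is that non-i.i.d.\ stopping times are now permitted. This is essential once $\alpha \ge 1$: by Jensen's inequality a uniform bound on $\E T_i^\alpha$ forces $\E T_i$ to be uniformly bounded, and in the strongly decomposable case Theorem~\ref{d20.1} would then force $X$ to be 2BM, hence BBM. For $\alpha<1$ the strongly decomposable construction of the preceding theorem already works, so the substantive case is $\alpha\ge 1$.

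The construction uses parabolic confinement in the spirit of Theorem~\ref{d30.5}(ii) in the regime $0<c_1<c_2<\infty$. Choose $c_1,c_2$ with $0<c_1<c_2<\infty$ and $\lambda_0(c_1,c_2)<1$, which is possible by Remark~\ref{d27.1}. Build a decomposable FBM $X$ whose stopping times $T_k$ are of the form \eqref{d29.4}--\eqref{d30.3}, so that
\[
c_1\sqrt{|t|}\le X_t\le c_2\sqrt{|t|}\qquad \forall\, t\le 0,\ \text{a.s.}
\]

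\textbf{Proof that $X$ is not BBM.} Suppose for contradiction that $X$ is BBM. Then $Y_s:=X_{-s}$ is FBM, so there exist random times $S_n\to-\infty$ with $\{Z^n_t:=Y_{S_n+t}-Y_{S_n},\ t\ge 0\}$ standard BM. Fix $n$ and work conditionally on $S_n$. For $t>|S_n|$ we have $S_n+t>0$, hence by the lower envelope
\[
Z^n_t \;=\; X_{-(S_n+t)} - Y_{S_n} \;\ge\; c_1\sqrt{S_n+t} - Y_{S_n}.
\]
Dividing by $\sqrt{t}$ and letting $t\to\infty$ (with $S_n$ fixed) gives $Z^n_t/\sqrt{t}\to c_1>0$ almost surely. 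But if $Z^n$ is standard BM, then $Z^n_t/\sqrt{t}\sim N(0,1)$, so for each $t$, $\P(Z^n_t/\sqrt{t}\le c_1/2)=\Phi(c_1/2)>0$ and in fact $\liminf_{t\to\infty}Z^n_t/\sqrt{t}=-\infty$ a.s.\ by LIL. This contradicts the deterministic lower bound, so $X$ is not BBM.

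\textbf{The main obstacle: uniform moment bound.} By Perkins' \cite[Lem.~10(b)]{Perkins}, the tails of the $T_k$ arising from the stopping rule \eqref{d29.4}--\eqref{d30.3} decay like $t^{-\lambda_0(c_1,c_2)}$, so the naive bound gives $\E T_k^\alpha<\infty$ only for $\alpha<\lambda_0(c_1,c_2)<1$, which is insufficient once $\alpha\ge 1$. The fix is to allow the parameters to vary with $k$: replace $c_1,c_2$ by sequences $c_{1,k}<c_{2,k}$ chosen so that $\lambda_0(c_{1,k},c_{2,k})>\alpha+\eta$ for some fixed $\eta>0$ while $c_{1,k}\to c_1^*$ and $c_{2,k}\to c_2^*$ with $0<c_1^*<c_2^*<\infty$. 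One has to show that the inductive scheme producing the $a_k$'s in the proof of Theorem~\ref{d30.5}(ii) can be run with these varying parameters so that (i) the asymptotic confinement $c_1^*\sqrt{|t|}\le X_t\le c_2^*\sqrt{|t|}$ still holds (so the not-BBM argument above applies), and (ii) the tail coefficients $K_k$ in $\P(T_k>t)\sim K_k t^{-\lambda_0(c_{1,k},c_{2,k})}$ are uniformly bounded, so that $\sup_k\E T_k^\alpha<\infty$. The analog of the delicate parameter balancing in the proof of Theorem~\ref{d30.5}(ii) (conditions \eqref{first}--\eqref{fifth}, the inductive choice of $a_k$, and the bookkeeping at the end) constitutes the main technical difficulty; the not-BBM conclusion itself follows immediately from the parabolic lower envelope once the construction is in place.
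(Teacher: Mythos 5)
Your proposal has a genuine gap, and it sits exactly where you place the ``main obstacle.'' First, the tail estimate you invoke is not available: \cite[Lem.~10(b)]{Perkins} gives the asymptotics of the probability that Brownian motion stays in the parabolic region \emph{starting from a fixed time}, i.e.\ of $r(n,a,c_1,c_2)$; it says nothing about the tail of the waiting time $T_k$ for a \emph{backward}-parabolic point as in \eqref{d29.4}--\eqref{d30.3}. The only control on that waiting time coming from the proof of Theorem \ref{d30.5}(ii) is a Borel--Cantelli argument along the (at least doubly exponential) scales $u_n$, which yields $T_k<\infty$ a.s.\ but at best a tail decaying like a power of $\log t$ --- not enough for $\E T_k^\alpha<\infty$ for \emph{any} $\alpha>0$. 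Second, your proposed repair is internally contradictory: you want $\lambda_0(c_{1,k},c_{2,k})>\alpha+\eta$, but by \eqref{d26.5} the backward-parabolic points used to define $T_k$ exist with probability $0$ once $\lambda_0>1$, so for $\alpha\ge 1$ (precisely the case you correctly identify as the substantive one, via Jensen and Theorem \ref{d20.1}) the stopping times would be a.s.\ infinite and the construction collapses. Your ``not BBM'' argument from the lower envelope $X_t\ge c_1\sqrt{|t|}$ is fine, but it never gets off the ground because the moment condition cannot be met by this route.

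The paper's construction is entirely different and avoids parabolas. It takes $T_{-i}=\inf\{t\ge 1: B^{-i}_t-B^{-i}_{t-1}=c_{j(i)}\}$ with probability $p_{j(i)}$ and $T_{-i}=0$ otherwise; these waiting times have exponential tails, hence finite $\alpha$-moments $\lambda(c_j)$ for every $\alpha$, and the thinning $p_j=1/\lambda(c_j)$ makes $\E T_i^\alpha=1$ uniformly. The non-BBM property then comes from a quantitative mismatch: with high probability the backward process exhibits a unit-time increment of size $c_{m+1}$ within time $u_m+m$ of the origin, whereas a genuine Brownian motion needs longer than $u_m+m$ to produce such an increment with probability at least $1/2$, by the choice of $c_{m+1}$. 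You would need to either adopt an argument of this type or find a construction in which the confinement stopping times genuinely have uniformly bounded $\alpha$-moments for the given $\alpha$.
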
 

\begin{proof}
Assume that $p_j \in (0,1)$, $k_j \in \N$, and $c_j>0$ for each $j \in \N$ (we will specify the values of these parameters later in the proof). 
For $i \in \N$ let $j(i)$ be the unique integer $j$ satisfying 
$\sum_{m=1}^{j-1} k_m +1 \le i \le \sum_{m=1}^{j} k_m$. For each $i\in \N$ toss a coin which comes up heads with probability $p_{j(i)}$ (independently of 
everything else) and define $T_{-i}:= \inf\{t \ge 1: B^{-i}_t-B^{-i}_{t-1}=c_{j(i)}\}$ if coin $i$ comes up heads and $T_{-i}=0$ otherwise.  Further, for $i \in \N_0$, we define $T_i \equiv 1$. 
For $c>0$ and a Brownian motion $B$, let $\lambda(c):= \E (\inf\{t \ge 1: B_t-B_{t-1}=c\})^\alpha$. 
It is easy to see that $\inf\{t \ge 1: B_t-B_{t-1}=c\}$ is stochastically majorized by a constant plus an exponential random variable 
so $\lambda(c)$ is finite for every $c<\infty$ and $\alpha \in (0,\infty) $. 

We now define the numbers $p_j,\,k_j,\,c_j$ recursively starting with $c_1=1$. Given the numbers $c_1,...,c_m$, $p_1,...,p_{m-1}$, and $k_1,...,k_{m-1}$, we
define $p_{m}:=1/\lambda(c_m)$. This implies that $\E T_m^\alpha=1$ for all $m\in \Z$. Let $k_m:=\lceil 1/p_m \rceil$. This implies that $\sum_{j\in\N} k_j p_j=\infty$ and therefore guarantees that   infinitely many of the $T_{-i}$, $i\in\N$, are at least 1. Let $u_m$ be a positive number such that
$$
\P\left( \sum_{i=1}^{k_1+...+k_m} T_{-i}+1 \ge u_m\right)\le 2^{-m}.
$$ 
Then, choose $c_{m+1}$ so large that for a Brownian motion $B$ we have
$$
\P( \inf\{t \ge 1: B_t-B_{t-1}=c_{m+1}\} < u_m + m ) \le 1/2. 
$$
This completes the definition of $p_j$'s, $k_j$'s and $c_j$'s.

Let $X$ be the decomposable FBM  associated to the sequence $(B^i,T_i)$. Assume that $X$ is a BBM. We will show that this assumption leads to a contradiction. Suppose that $S$ is a random variable such that 
$\{Y(t):=X(S-t)-X(S), t \ge 0\}$ is a Brownian motion.  We have
\begin{align*}
\P(&\inf\{t \ge 1: Y(t)-Y(t-1)=c_{m+1}\} \ge u_m + m)\\
&\le  \P(S\ge m)
+\P\left(S\le -\sum_{i=1}^{k_1+...+k_m} T_{-i}\right)
+\P\left(\sum_{i=1}^{k_1+...+k_m} T_{-i}+1 \ge u_m\right).
\end{align*}
Note that each probability on the right hand side
converges to 0 as $m \to \infty$. On the other hand,
\begin{align*}
&\P(\inf\{t \ge 1: B_t-B_{t-1}=c_{m+1}\} \ge u_m + m) \\
&\quad =1-\P(\inf\{t \ge 1: B_t-B_{t-1}=c_{m+1}\} < u_m + m)\ge 1/2,
\end{align*}
for each $m$, so $Y$ and $B$ cannot have the same law and the proof of the theorem is complete.
\end{proof}

\section{A process that is an FBM and BBM but not a 2BM}\label{sec:FBMBBM}

``Most'' local path properties of every FBM are the same as those of standard Brownian motion. For example, FBM paths are continuous, non-differentiable and satisfy the local law of the iterated logarithm at almost all (with respect to Lebesgue measure) times. We said ``most'' properties because there are some clear exceptions, for example, $X_t>0$ for $t\in[-\eps, 0)$, for every $\eps>0$, if $X$ is 
constructed as in Example \ref{d17.1}. Needless to say, standard Brownian motion does not have this property. However, this exception is clearly an artifact of the construction given in Example \ref{d17.1} and does not characterize a ``typical'' local behavior of the paths of $X$ in that example.

The definition of FBM implies that the global path properties of FBM, such as the global law of the iterated logarithm, are identical to those of standard Brownian motion in the forward time direction. If we now assume that a process is both FBM and BBM, then this process has the same global path properties as standard Brownian motion in the forward and backward time directions. Hence, such a process has the same (or very similar) local and global path properties as 2BM. 
It is tempting to conjecture that this process is a 2BM
because it is hard to guess in what way this process might be different from 2BM. Nevertheless, it turns out that there exists a  
process that is FBM and BBM but not 2BM. The reason why this is possible is, roughly speaking, that the increments of this process are heavily correlated on scales that are ``invisible'' if we observe the process from the viewpoints set at some random times.

The presentation of our construction will be discrete in nature. 
See Definition \ref{d18.7} for the definitions of FRW, BRW and 2RW.

\begin{theorem}\label{d22.2}
There exists a process $X$ which is FBM and BBM but not 2BM.
\end{theorem}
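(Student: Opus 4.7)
My plan is to reduce the problem to its discrete counterpart. By Remark \ref{d18.10}(i), if I exhibit an integer-valued process $Z=\{Z_n,n\in\Z\}$ with $|Z_{n+1}-Z_n|=1$ a.s.\ that is simultaneously FRW and BRW, then the lift $X$ described there is automatically both FBM and BBM. So the substantive work is to construct $Z$ that is FRW and BRW but not 2RW, and then to show that the resulting lift $X$ is not 2BM. The latter transfer I plan to obtain via Remark \ref{d18.10}(ii): if $X$ were 2BM with some random origin $S$, then sampling $X$ at consecutive hitting times of neighboring integers starting at $S$ would produce a 2RW, and a uniqueness argument (using that the hitting-time sampling recovers the original discrete backbone of the lift) would force $Z$ itself to be 2RW, contradicting the construction.

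The construction of $Z$ will be a bi-infinite concatenation of random-walk blocks, but, crucially, \emph{not} an i.i.d.\ one—by Theorem \ref{d20.1}, i.i.d.\ blocks of finite mean length would yield 2RW, so I plan to use (a) heavy-tailed block lengths so Theorem \ref{d20.1} does not apply and (b) a hidden correlation between distant blocks that is arranged to be invisible from the block boundaries. The block data is a family $\{(T_k,B^k)\}_{k\in\Z}$ of random walk pieces, chosen so that each $B^k$ conditional on $T_k$ is a SSRW, and so that the time-reversed block has the same law as the block itself (e.g.\ $T_k$ a ``symmetric'' stopping time, such as a first return to the starting level after exceeding some random threshold). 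Symmetry of the block law ensures that, applied to the block boundaries $S_n$, the strong Markov property gives FRW; and applied symmetrically at a mirror sequence $T_n\to\infty$ of right endpoints, gives BRW. The hidden correlation comes from pairing $T_k$ with $T_{-k}$ (or with $T_{f(k)}$ for some more elaborate pairing), via a single shared auxiliary random variable $\Theta_k$ that is not $\sigma(B^k)$-measurable. Because $\Theta_k$ lies outside the $\sigma$-algebra generated by any single forward or backward block, it does not disturb the SSRW laws of the forward (resp.\ backward) concatenations from $S_n$ (resp.\ $T_n$).

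The main obstacle, and the step I expect to require the most care, is proving that $Z$ is not 2RW. I plan to argue by contradiction: suppose some random time $S$ makes $\{Z_{S+k}-Z_S,k\ge 0\}$ and $\{Z_{S-k}-Z_S,k\ge 0\}$ independent SSRWs. Define a pair of tail statistics $F$ and $G$, where $F$ is measurable with respect to the forward sigma-algebra from $S$ and $G$ with respect to the backward one, chosen so that both $F$ and $G$ decode the shared randomness $\Theta$ coupling a distant pair of blocks. Because the coupling is baked into the block-size process, each of $F,G$ is determined by the long-range ratio or matching pattern of certain block lengths on one side, and is therefore a deterministic function of a forward (resp.\ backward) SSRW. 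Under the assumed 2RW structure, $F$ and $G$ would be independent, whereas by the matched-$\Theta$ construction we will have $\E[FG]\neq \E F\,\E G$, the desired contradiction. The delicate point is to ensure the coupling is strong enough to survive the arbitrary choice of $S$ (which might not align with the natural $S_n$), and I plan to handle this by making the pairing scale-invariant, so that for \emph{any} origin $S$ all but finitely many paired blocks still lie entirely to one side of $S$ and hence contribute to the forward/backward decoupling obstruction.

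Finally, having $Z$ in hand, I would form $X$ via Remark \ref{d18.10}(i) using the auxiliary i.i.d.\ 3-dimensional Bessel pieces $U^k$ and Brownian excursions $B^k$ described there. The remark directly gives FBM and BBM for $X$. For non-2BM of $X$, I would apply the hitting-time construction of Remark \ref{d18.10}(ii) to the assumed 2BM $X$: the resulting sampled integer process is 2RW(0), but I will argue that by construction the hitting times of neighboring integers starting from $X_S$ coincide (eventually, or up to a shift) with the $M_j$'s of the lift, so the sampled 2RW is, up to a shift, the original $Z$. The non-2RW of $Z$ then contradicts the 2RW of the sampled process, completing the proof.
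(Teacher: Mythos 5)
Your high-level strategy (build a $\pm 1$ increment sequence that is FRW and BRW but not 2RW, then lift it via Remark \ref{d18.10}(i)) is the same as the paper's, but both of your key steps have genuine gaps. First, the construction of $Z$: you want paired blocks $k$ and $-k$ to share an auxiliary variable $\Theta_k$ that is ``invisible'' from the block boundaries yet decodable from tail statistics of the path on one side. These two requirements are in direct tension. For the forward concatenation seen from $S_{-n}$ to be a simple symmetric random walk its increments must be i.i.d.; but your non-2RW argument requires a statistic $F$, a function of those very increments on one side of $S$, to carry information about $\Theta$ --- and since $S_{-n}\to-\infty$, for large $n$ both members of any fixed pair lie to the right of $S_{-n}$, so the shared $\Theta_k$ would induce a detectable correlation \emph{inside} the forward path, contradicting the i.i.d.\ increment property. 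The assertion that $\Theta_k$ ``does not disturb the SSRW law'' because it is not measurable with respect to any single block is not an argument. The paper resolves exactly this tension with a specific device: it literally duplicates a freshly tossed increment pattern in a window placed across the origin ($V_k = V_{k-a_n+d_n+b_n+1}$ for $k\in[a_n-d_n,a_n-1]$), extends outward with coin tosses until the whole central pattern recurs, and then proves --- via the explicit coupling with a geometric number of blocks drawn from the pattern-avoiding law $Q_{\bs}$ --- that the one-sided sequence seen from just after that recurrence is \emph{exactly} a sequence of coin tosses, even though the duplication is pathwise always present. Some lemma of this kind is indispensable and is missing from your plan. (As an aside, heavy-tailed block lengths are neither necessary nor sufficient here: the paper notes that $T_k$ independent of $B^k$ with infinite mean still yields a 2BM.)

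Second, the passage from ``$Z$ is not 2RW'' to ``$X$ is not 2BM'' does not follow from Remark \ref{d18.10}(ii) in the way you describe. If $X$ were 2BM with origin $S$, the sampled walk of that remark lives on the lattice $X_S+\Z$ and its sampling times are hitting times of that shifted lattice; there is no reason for these to coincide, even eventually or up to a shift, with the $M_j$'s of the lift, since $X_S$ is generically not an integer and the two skeletons never synchronize. So you cannot conclude that the sampled 2RW ``is'' $Z$, nor that its being 2RW forces $Z$ to be 2RW. The paper instead argues directly in continuous time: it rescales $X$ around the hypothetical origin $S$ by $d_{n_k}^{-1/2}$, shows that the planted duplication forces the two allegedly independent rescaled Brownian motions to come within $\eps_{n_k}$ of one another at times near $1$ for infinitely many $k$, and contradicts this with a Borel--Cantelli estimate. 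A quantitative argument of this type is needed; the discrete non-2RW statement alone does not transfer.
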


\begin{proof}
We will first construct a process $\{V_k,\, k \in \Z\}$, taking values in $\{-1,1\}$, which is the increment sequence of a process which is FRW and BRW 
but not a 2RW.
The construction will be inductive.
At the $n$-th step, we will define the values of $V_k$ for $k\in [a_n, b_n]$, where $a_n$ and $b_n$ are random integers satisfying $a_{n+1} < a_n < 0 < b_n < b_{n+1}$ for all $n\in \N$, a.s. 

We will call a sequence of random variables {\em coin tosses} if they are i.i.d., taking values 1 and $-1$ with equal probabilities.

For $n=1$, we take $a_1 = -1$, $b_1=1$ and we let $V_k$, $-1\leq k\leq 1$, be coin tosses.

Suppose that $[a_n,b_n]$ and 
$\{V_k,k\in [a_n, b_n]\}$ have been defined.

Let $c_n\in\N$ be a constant so large that 
\begin{align}\label{j6.3}
\P(|a_n| \lor b_n \geq c_n) < 1/n^2.
\end{align}
Let $d_n\in\N$ be so large that $(4c_n+1)^2 2^{-d_n} < 1/n^2$.

Let $\{V_k,k\in [ b_n+1, b_n + d_n]\}$ be coin tosses independent of $\{V_k,k\in [a_n, b_n]\}$ and let $V_k = V_{k - a_n + d_n + b_n +1}$ for $k \in [a_n- d_n, a_n -1]$. If we set $a'_n = a_n -d_n$ and $b'_n = b_n + d_n$ then we see that $\{V_k,k\in [ a'_n, b'_n ]\}$ has been defined.

Let $\{U^{n-}_k, k < a'_n\}$ and $\{U^{n+}_k, k > b'_n\}$ be two sequences of coin tosses independent from each other and jointly independent of $\{V_k,k\in [ a'_n, b'_n ]\}$. 
Let $a_{n+1} $ be the largest integer of the form $a_{n+1} = j (b'_n - a'_n +1) + a'_n$ for some $j < 0$, with the property that $ U^{n-}_{k +a_{n+1} - a'_n}=V_k$ for all 
$k\in [ a'_n, b'_n ]$. Since $\{U^{n-}_k, k < a'_n\}$ are coin tosses, it is easy to see that such an integer $a_{n+1}$ exists. By analogy, we define 
$b_{n+1} $ as the smallest integer of the form $b_{n+1}= j (b'_n - a'_n +1) + a'_n$ for some $j > 0$, such that $ U^{n+}_{k +b_{n+1} - b'_n}=V_k$ for all 
$k\in [ a'_n, b'_n ]$.

We let $V_k = U^{n-}$ for $k\in [a_{n+1}, a'_n -1]$ and 
$V_k = U^{n+}$ for $k\in [ b'_n +1, b_{n+1}]$.
We have thus defined $[a_{n+1},b_{n+1}]$ and 
$\{V_k,k\in [a_{n+1},b_{n+1}]\}$. This completes the inductive step and the definition of $\{V_k, k\in \Z\}$.

Let $Z_0=0$ and $Z_{k+1} - Z_k = V_k$ for $k\in \Z$.
We will argue that $Z$ is FRW and BRW but not 2RW.

Fix any $m\in\N$ and
a deterministic sequence $\bs\in \{-1,1\}^m$. Let $Q_\bs$ be the distribution of the sequence of $m$ coin tosses conditioned not to be equal to $\bs$. The probability that a sequence of $m$ coin tosses is not equal to $\bs$ is $p_m := 1 - 2^{-m}$. Let $\alpha_1$ and $\alpha_2$ be independent geometric random variables with parameter $p_m$, that is $\P(\alpha_i = k) = p_m^ k (1-p_m)$ for $i=1,2$ and $k\in \N_0$. Let $\{Y^{i,j}_n, n\in[1,m]\}_{j=1,\dots,\alpha_i}$, $i=1,2$, be i.i.d. sequences with distribution $Q_\bs$, independent of each other and of $\alpha_1$ and $\alpha_2$. If $\alpha_1>0$, let $R_{(j-1)m+n}  = Y^{1,j}_n$ for $j=1,\dots,\alpha_1$ and $n=1,\dots, m$. Let $\{R_n, n= \alpha_1 m +1, \dots, (\alpha_1+1) m\} = \bs$.
If $\alpha_2>0$, let $R_{(\alpha_1+ j)m+n}  = Y^{2,j}_n$ for $j=1,\dots,\alpha_2$ and $n=1,\dots, m$. Let $\{R_n, n= (\alpha_1 +\alpha_2 +1) m +1, \dots, (\alpha_1+\alpha_2+2) m\} = \bs$.
Let $\{R_n, n \geq (\alpha_1+\alpha_2+2) m +1\}$ be a sequence of coin tosses independent of $\{R_n, n= 1, \dots, (\alpha_1+\alpha_2+2) m\} $. It is elementary to see that $\{R_n, n\geq 1\}$ is a sequence of coin tosses.

Since the distribution of $\{R_n, n\geq 1\}$ does not depend on $m$ or $\bs$, we see that if $m\in \N$ and $\bs\in \{-1,1\}^m$ are chosen in an arbitrary random way, the distribution of $\{R_n, n\geq 1\}$ is still that of a sequence of coin tosses.

Let $S_{-n}=a_{n+1} + b'_n -a'_n +1 $ for $n\in \N$. We will argue that $\{V_k, k\geq S_{-n}\}$ is a sequence of coin tosses. If we take $m = b'_n -a'_n +1$ and $\bs = \{V_k,k\in [ a'_n, b'_n ]\}$ then it follows from our constructions of $\{V_k, k\in \Z\}$ and $\{R_n, n\geq 1\}$ that the distribution of $\{V_k, k\geq S_{-n}\}$ is the same as that of $\{R_n, n\geq 1\}$ and hence it is the distribution of a sequence of coin tosses. Since $S_{-n}\to -\infty$, we conclude that $Z$ is FRW. The process $Z$ is BRW by the symmetry of our construction.

We will now assume that $Z$ is 2RW and we will show that this leads to a contradiction.
Let $S$ be such that 
$\{Z_{S+k}-Z_S,\,k\in \N_0\}$ and $\{Z_{S-k}-Z_S,\,k\in \N_0\}$ are independent simple symmetric random walks. 
If $W_k = Z_{S+k+1} - Z_{S+k}$ for $k\in \Z$ then $\{W_k, k\in \Z\}$ is a sequence of coin tosses. 
For an arbitrarily large $m$, we can find $n>m$ so large that $\P(|S| \geq c_n) < 1/m^2$.  
Recall that $\P(|a_n| \lor b_n \geq c_n) < 1/n^2$. Hence, 
\begin{align}\label{d21.1}
\P(\{|S| \geq c_n\}\cup\{|a_n| \lor b_n \geq c_n\}) < 1/m^2 + 1/n^2 \leq 2/m^2.
\end{align}
If $a^*$ and $ b^*$ are fixed integers such that $a^*< b^*$ then the probability that
$W_k = W_{k - a^* + d_n + b^* +1}$ for $k \in [a^*- d_n, a^* -1]$ is $2^{-d_n}$.
The probability that there exist integers
$a^*,b^*\in[-2c_n,2c_n]$ such that
$a^*< b^*$ and $W_k = W_{k - a^* + d_n + b^* +1}$ for $k \in [a^*- d_n, a^* -1]$ is bounded above by $(4c_n+1)^2 2^{-d_n} < 1/n^2 < 1/m^2$. The series $\sum_m 3/m^2$ is summable 
so the last estimate, \eqref{d21.1} and the Borel-Cantelli Lemma imply that there exist infinitely many $n$ such that
$|S| < c_n$, $|a_n| \lor b_n < c_n$ and there are no $a^*,b^*\in[-2c_n,2c_n]$ such that
$a^*< b^*$ and $W_k = W_{k - a^* + d_n + b^* +1}$ for $k \in [a^*- d_n, a^* -1]$. 
This contradicts the fact that for every $n>1$,
$V_k = V_{k - a_n + d_n + b_n +1}$ for $k \in [a_n- d_n, a_n -1]$. 

Let $X$ be defined in terms of $Z$ as in Remark \ref{d18.10} (i). We have indicated in that remark that the fact that $Z$ is FRW and BRW implies that $X$ is FBM and BBM. It remains to show that $X$ is not 2BM. 

Let $\eps_n>0$ be so small that for standard Brownian motion $B$ and any $x\in \R$, 
\begin{align}\label{j6.2}
\P(\exists t\in [1-\eps_n, 1+\eps_n]: 
|B_t -x|\leq 2\eps_n) \leq 1/n^2.
\end{align}
We can find $s_n \in (0,\eps_n)$ so small that
\begin{align}\label{j6.9}
\P(\exists s,t\in [1-s_n, 1+s_n]: 
|B_t -B_s|\geq \eps_n) \leq 1/n^2.
\end{align}
This and \eqref{j6.2} imply that if $B$ and $B'$ are independent Brownian motions then
\begin{align}\label{j5.1}
\P(\exists s,t\in [1-s_n, 1+s_n]: 
|B_t-B'_s| \leq \eps_n) \leq 2/n^2.
\end{align}

Note that in the first part of our proof, we can take $d_n$ arbitrarily large relative to $c_n$. 
Hence, we can and will assume without loss of generality that 
\begin{align}\label{j6.1}
\frac{d_n}{d_n+c_n} \geq 1- s_n/4.
\end{align}
We make $d_n$ larger, relative to $c_n$, if necessary, so that
\begin{align}\label{j6.7}
4 d_n^{-1/2} c_n \leq \eps_n/2.
\end{align}

The random variables $M_{j+1} - M_j$ defined in Remark \ref{d18.10} (i) are i.i.d. They represent the time  Brownian motion starting from 0 takes to hit 1 or $-1$. It is well known that these random variables have mean 1 and exponential tails. This, \eqref{j6.3}, \eqref{j6.1} and the law of large numbers imply that 
\begin{align}\label{j6.4}
\P\left(\left|\frac{M_{a_n-d_n} }{d_n} -1\right| \geq s_n/2\right) \leq 2/n^2,
\qquad
\P\left(\left|\frac{M_{b_n+d_n} }{d_n} -1\right| \geq s_n/2\right) \leq 2/n^2.   
\end{align}

Suppose that a random time $S$ is such that 
$\{X_{S+t}-X_S,\,t \ge 0\}$ and  $\{X_{S-t}-X_S,\,t \ge 0\}$ are independent standard Brownian motions.
We can make $d_n$'s larger, if necessary, so that the products $d_n s_n$ are so large that 
for $k\in \N$ we can find $n_k> k\lor n_{k-1}$ so large that 
\begin{align*}
\P\left(\left|S/d_{n_k} \right| \geq s_{n_k}/4\right) \leq 1/k^2.   
\end{align*}
This and \eqref{j6.4} yield
\begin{align}\label{j6.5}
&\P\left(\left|\frac{M_{a_{n_k}-d_{n_k}} -S}{d_{n_k}} -1\right| \geq s_{n_k}/2\right) \leq 2/{n_k}^2 + 1/k^2 \leq 3/k^2,\\
&\P\left(\left|\frac{M_{b_{n_k}+d_{n_k}} -S}{d_{n_k}} -1\right| \geq s_{n_k}/2\right) \leq  3/k^2.  \nonumber 
\end{align}
Let 
\begin{align*}
G_k = \left\{
\left|\frac{M_{a_{n_k}-d_{n_k}} -S}{d_{n_k}} -1\right| \geq s_{n_k}/2, \left|\frac{M_{b_{n_k}+d_{n_k}} -S}{d_{n_k}} -1\right| \geq s_{n_k}/2\right\}.
\end{align*}
It follows from \eqref{j6.5}, summability of $\sum_{k\in\N} 3/k^2$ and Borel-Cantelli Lemma that only a finite number of events $G_k$ occur.

For any $k \geq 1$, the processes 
\begin{align*}
&\{B^{(k)}_t := d_{n_k}^{-1/2}(X(S+t d_{n_k})-X(S)),\,t \ge 0\},\\
&\{B^{[k]}_t := d_{n_k}^{-1/2}(X(S-t d_{n_k})-X(S)),\,t \ge 0\}
\end{align*}
are independent Brownian motions.
Let 
\begin{align*}
F_k = \{\exists s,t\in [1-s_{n_k}, 1+s_{n_k}]: 
|B^{(k)}_t-B^{[k]}_s| \leq \eps_{n_k}\}.
\end{align*}
By \eqref{j5.1}, $\P(F_k) \leq 2/n_k^2 < 2/k^2$.
Since $\sum_{k\in \N} 2/k^2 < \infty$, only a finite number of events $F_k$ occur.

It follows from \eqref{j6.3} that only a finite number of events $\{|a_n| \lor b_n \geq c_n\}$ occur. Assuming that $|a_n| \lor b_n \leq c_n$, 
\begin{align*}
|Z_{a_n-d_n} - Z_{b_n-d_n}| \leq 2c_n.
\end{align*}
It follows that, for sufficiently large $n$,
\begin{align*}
|X(M_{a_n-d_n}) - X(M_{b_n-d_n})| \leq 2c_n,
\end{align*}
and, therefore, for all sufficiently large $k$,
\begin{align}\label{j6.8}
|B^{(k)}((M_{a_{n_k}-d_{n_k}}-S)/d_{n_k})-B^{[k]}((M_{b_{n_k}+d_{n_k}}-S)/d_{n_k})|
\leq d_{n_k}^{-1/2} 2 c_{n_k} \leq \eps_{n_k} /2, 
\end{align}
where the last inequality holds by \eqref{j6.7}.
Recall that only a finite number of events $G_k$ occur. If $G_k$ does not hold then, because of \eqref{j6.8}, $F_k$ holds with
\begin{align*}
t=(M_{a_{n_k}-d_{n_k}}-S)/d_{n_k},
\qquad s=(M_{b_{n_k}+d_{n_k}}-S)/d_{n_k}. 
\end{align*}
This contradicts the fact that only a finite number of events $F_k$ hold.
\end{proof}

\begin{proposition}\label{d22.1}
(i) There exists an FBM $X$ such that there is no random time $T$ such that $\{X_{T+t}-X_T, t\geq 0\}$ and $\{X_{T-t}-X_T, t\geq 0\}$ are independent and $\{X_{T+t}-X_T, t\geq 0\}$ is standard Brownian motion.

(ii) There is an FBM $X$ that is not decomposable.
\end{proposition}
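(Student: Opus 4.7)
The plan is to take $X$ to be the FBM constructed in the proof of Theorem~\ref{d22.2} as the witness for both parts and to reduce (ii) to (i). If $X$ were decomposable then, in the notation of Definition~\ref{d17.3}, the choice $T:=U$ would make $\{X_{T+t}-X_T,\,t\ge 0\}$ the concatenation of the independent Brownian motion pieces $(T_k,B^k)_{k\ge 0}$, itself a standard Brownian motion by iterated use of the strong Markov property, while $\{X_{T-t}-X_T,\,t\ge 0\}$ depends only on $(T_k,B^k)_{k<0}$ and is therefore independent of the forward part. Hence a decomposable FBM always admits the random time forbidden by (i), so the $X$ witnessing (i) is automatically a witness for (ii).

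To prove (i) I would assume for contradiction that such a $T$ exists and transfer the problem to the underlying random walk $Z$. Using Remark~\ref{d18.10}(i) one recovers from $X$ the integer hit times $M_j$ and integer values $Z_j=X(M_j)$ as measurable functions of $X$; set $V_j:=Z_j-Z_{j-1}$ and let $J(T):=\min\{j:M_j>T\}$, which is measurable with respect to $\sigma\{X_t,\,t\le T\}$. The first integer hit of the forward Brownian motion $\{X_{T+t}-X_T\}$ is $M_{J(T)}$, a stopping time; by the strong Markov property and the fact that Brownian motion started at an integer produces an SSRW of integer increments, the sequence $W_k:=V_{J(T)+k}$, $k\ge 1$, is i.i.d.\ with $\P(W_k=\pm 1)=\frac12$. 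The independence hypothesis then makes $(W_k)_{k\ge 1}$ independent of $\sigma\{X_t,\,t\le T\}$, and in particular of all past $V_j$ with $j\le J(T)-1$.

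With this discrete structure in hand, the contradiction comes from a variant of the Borel--Cantelli argument at the end of the proof of Theorem~\ref{d22.2}. Pass to a subsequence $\{n_k\}$ along which $\P(|J(T)|\ge c_{n_k})$ is summable (possible because $c_n\to\infty$ and $J(T)$ is a.s.\ finite), and strengthen the constructional inequality to $(4c_n+1)^2\,2^{-(d_n-4c_n)}<1/n^2$ --- a harmless modification since $d_n$ was only required to be sufficiently large. On $H_{n_k}:=\{|J(T)|\vee|a_{n_k}|\vee b_{n_k}<c_{n_k}\}$ and for every pair $a^*,b^*\in[-c_{n_k},c_{n_k}]\cap\Z$ with $a^*<b^*$, the shift $\Delta:=-a^*+d_{n_k}+b^*+1$ is at least $d_{n_k}-2c_{n_k}+2$, and at least $d_{n_k}-4c_{n_k}$ indices $k'\in[a^*-d_{n_k},a^*-1]$ satisfy simultaneously $k'\le J(T)-1$ and $k'+\Delta\ge J(T)+1$; for such ``good'' $k'$, $V_{k'}$ is past-measurable while $V_{k'+\Delta}$ are distinct entries of $(W_k)_{k\ge 1}$ independent of the past. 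Conditioning on the past therefore gives the exact matching $V_{k'}=V_{k'+\Delta}$ for all $k'\in[a^*-d_{n_k},a^*-1]$ probability at most $2^{-(d_{n_k}-4c_{n_k})}$; a union bound over $(a^*,b^*)$ together with summability of $\P(H_{n_k}^c)$ makes the scale-$n_k$ detection event summable, and Borel--Cantelli forces it to occur only finitely often --- contradicting the construction of $V$, for which $(a_{n_k},b_{n_k})$ witnesses the event for every $k$. The main technical obstacle is the index bookkeeping, i.e.\ checking that at most $O(c_{n_k})$ of the $d_{n_k}$ matching constraints can straddle $T$ in a way that leaves them unconstrained by forward-SSRW independence; the estimate $\Delta\ge d_{n_k}-2c_{n_k}+2$ is precisely what rules this out and replaces the backward-SSRW hypothesis used in the original argument.
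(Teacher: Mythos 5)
Your reduction of (ii) to (i) is exactly the paper's: a decomposable $X$ admits a random time $U$ at which the forward increments form a Brownian motion independent of the backward increments, so any witness for (i) automatically witnesses (ii). The discrete Borel--Cantelli scheme you run at the end is also essentially the paper's argument for the random-walk version of (i) (the paper likewise restricts to matched pairs straddling the origin and uses only that the forward increments are fair coins independent of the backward block). The problem lies in the bridge you build between the continuous hypothesis and that discrete argument.

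The gap: the hypothesis of (i) is that the two \emph{increment} processes $F=\{X_{T+t}-X_T,\,t\ge0\}$ and $G=\{X_{T-t}-X_T,\,t\ge0\}$ are independent, with $F$ a Brownian motion. It does \emph{not} say that $F$ is independent of the pair $(T,X_T)$, and your embedded walk depends on exactly that extra information. The renewal level active at time $T$ is $Z_{J(T)-1}$, so the first renewal after $T$ is $M_{J(T)}=T+\inf\{s>0:|F_s-\ell|=1\}$ with offset $\ell=Z_{J(T)-1}-X_T\in(-1,1)$, and every subsequent step $W_k$ is a functional of $(F,\ell)$. Since $\ell$ is a functional of $(G,X_T,T)$ but not of $G$ alone, the strong Markov property you invoke requires $F$ to be a Brownian motion in the filtration enlarged by $\sigma(\ell)$, i.e.\ $F$ independent of $(G,\ell)$ --- strictly more than the hypothesis gives. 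A random time can in principle correlate $X_T$ with the forward path while leaving $F$ a Brownian motion independent of $G$ (cf.\ the cautionary example in Remark \ref{d18.5}), and then the conditional coin-toss computation underlying your union bound is unavailable. Relatedly, $J(T)$ is \emph{not} $\sigma\{X_t,\,t\le T\}$-measurable: the index is anchored at $M_0=0$, and counting renewals between $T$ and $0$ uses the path after $T$. That particular point is repairable by re-indexing everything relative to $J(T)$, as the paper does with $W_k=Z_{S+k+1}-Z_{S+k}$, but the dependence on $\ell$ is not. This is precisely why the paper does not pass to the embedded walk for the continuous statement: it reruns the continuous argument of Theorem \ref{d22.2} (the events $F_k$ and $G_k$ of that proof, built from the rescaled processes $B^{(k)}$, $B^{[k]}$), whose probability estimate \eqref{j5.1} uses only that $B^{(k)}$ is a Brownian motion, $B^{[k]}$ is continuous, and the two are independent; the embedded walk enters only pathwise, to show the coincidence events occur infinitely often, where no distributional claim is needed.
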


\begin{proof}
(i) First, we will show that for the FRW $Z$ constructed in Theorem \ref{d22.2}, there is no stopping time $S$ such that 
$\{Z_{S+k}-Z_S, k\in\N_0\}$ and $\{Z_{S-k}-Z_S, k\in\N_0\}$ are independent and $\{Z_{S+k}-Z_S, k\in\N_0\}$ is simple symmetric random walk. 
We will apply the same argument as
in the part of the proof of Theorem \ref{d22.2} showing that $Z$ is not 2RW.
We replace the paragraph in that proof containing \eqref{d21.1} with the following.

Let $S$ be such that 
$\{Z_{S+k}-Z_S, k\in\N_0\}$ and $\{Z_{S-k}-Z_S, k\in\N_0\}$ are independent and $\{Z_{S+k}-Z_S, k\in\N_0\}$ is simple symmetric random walk.
If $W_k = Z_{S+k+1} - Z_{S+k}$ for $k\in \Z$ then $\{W_k, k\in \Z\}$ is a sequence of coin tosses.
Let $c_0=1$ and recall that $a_{k+1} < a_k < 0 < b_k < b_{k+1}$ for all $k\in \N$. 
For $m\in\N$, we can find $n_m>m$ so large that $\P(|S| \geq c_{n_{m-1}}) < 1/m^2$ and $c_{n_{m-1}}
\leq |a_{n_m}| \land b_{n_m}$.  
Recall that $\P(|a_k| \lor b_k \geq c_k) < 1/k$. Hence, 
\begin{align}\label{d22.3}
\P(\{|S| \geq c_{n_{m-1}}\}\cup\{c_{n_{m-1}}
\leq |a_{n_m}| \land b_{n_m} \leq |a_{n_m}| \lor b_{n_m} \geq c_{n_m}\}) < 1/n_m^2
+ 1/m^2 \leq 2/m^2.
\end{align}
If $a^*$ and $ b^*$ are fixed integers such that $a^*<0< b^*$ then the probability that the events
$\{W_k = W_{k - a^* + d_{n_m} + b^* +1}\}$ hold for $k \in [a^*- d_{n_m}, a^* -1]$ is $2^{-d_{n_m}}$.
The probability that there exist integers
$a^*,b^*\in[-2c_{n_m},2c_{n_m}]$ such that
$a^*< b^*$ and $W_k = W_{k - a^* + d_{n_m} + b^* +1}$ for $k \in [a^*- d_{n_m}, a^* -1]$ is bounded above by $(4c_{n_m}+1)^2 2^{-d_{n_m}} < 1/{n_m}^2 < 1/m^2$. The series $\sum_m 3/m^2$ is summable 
so the last estimate, \eqref{d22.3} and the Borel-Cantelli Lemma imply that there exist infinitely many $n_m$ such that
$|S| < c_{n_m}$, $|a_{n_m}| \lor b_{n_m} < c_{n_m}$ and there are no $a^*,b^*\in[-2c_{n_m},2c_{n_m}]$ such that
$a^*<0< b^*$ and $W_k = W_{k - a^* + d_{n_m} + b^* +1}$ for $k \in [a^*- d_{n_m}, a^* -1]$. 
This contradicts the fact that for every $n>1$,
$V_k = V_{k - a_n + d_n + b_n +1}$ for $k \in [a_n- d_n, a_n -1]$. 

This completes the proof that for the FRW $Z$ constructed in Theorem \ref{d22.2}, there is no stopping time $S$ such that 
$\{Z_{S+k}-Z_S, k\in\N_0\}$ and $\{Z_{S-k}-Z_S, k\in\N_0\}$ are independent and $\{Z_{S+k}-Z_S, k\in\N_0\}$ is simple symmetric random walk. 

Suppose that there exists a random time $T$ such that $\{X_{T+t}-X_T, t\geq 0\}$ and $\{X_{T-t}-X_T, t\geq 0\}$ are independent and $\{X_{T+t}-X_T, t\geq 0\}$ is standard Brownian motion. Then we can proceed as in the proof
of Theorem \ref{d22.2}, staring with the paragraph containing \eqref{j6.2}. Note that for \eqref{j6.9}, we only need to know that the process $B$ is a.s. continuous (we do not have to assume that it is Brownian motion). The rest of the argument applies and thus we complete the proof of part (i) the proposition.

(ii) Suppose that the FBM $X$ considered in part (i) is decomposable. 
Then, in the notation of Definition \ref{d17.3}, there is a random variable $U$ such that $\{X_{U+t}-X_U, t\geq 0\}$ and $\{X_{U-t}-X_U, t\geq 0\}$ are independent and $\{X_{U+t}-X_U, t\geq 0\}$ is standard Brownian motion.
This contradicts part (i) so we conclude that FBM $X$ is not decomposable.
\end{proof}

\section{Open problems}\label{sec:open}

The following list is rather eclectic but we hope that the reader will find at least some of the problems intriguing.

\begin{problem}\label{pr1}
Assume that $X$ is a decomposable FBM which is also a BBM (we may or may not assume that the BBM is decomposable). Do these assumptions imply that $X$ is 2BM?
\end{problem}

\begin{problem}\label{pr2}
Assume that $X$ is an FBM and BBM and there exists a random time $T$ such that the processes $X_{T+t}-X_T,\,t \ge 0$ and $X_{T-t}-X_T,\,t\ge 0$ are independent. 
Do these assumptions imply that $X$ is 2BM?
\end{problem}

\begin{problem}\label{pr3}
Assume that $X$ is an FBM  and there exist random 
times $S$ and $T$ such that the processes $X_{T+t}-X_T,\,t \ge 0$ and $X_{T-t}-X_T,\,t\ge 0$ are independent 
and such that $X_{S-t}-X_S,\,t\ge 0$ is Brownian motion.  
Do these assumptions imply that $X$ is 2BM?
\end{problem}

Note that an affirmative answer to Problem \ref{pr1} implies the same for Problem \ref{pr2} and, similarly,
an affirmative answer to Problem \ref{pr2} implies the same for Problem \ref{pr3}.

\begin{problem}
Consider a decomposable FBM $X$ and assume that is is constructed from Brownian pieces of length 1, most of the time, but occasionally 
(more and more rarely as we move to the left) we insert ``Bessel'' pieces, i.e., we use the stopping times $T_k:=\inf\{t \ge 0: B^k_t=-1\}$.
Under which conditions (concerning the frequency of the Bessel pieces) is the resulting FBM a BBM (or 2-sided BM)? 
\end{problem}

\begin{problem}
Is it true that for any process $\{X_t,\,t\ge 0\}$ whose law is equivalent to BM, we can find some 
random piece which we can put in front of $X$ such that the new process is Brownian motion?
\end{problem}
 
\begin{remark}
Consider a strongly decomposable FBM $X$ with $\E T_k = \infty$. In this case 
$X$ may or may not be a 2BM. If, for example, $T_k$ is a positive random variable which is independent of $B_k$ and has an infinite expected value, 
then $X$ is clearly 2BM (even without 
shifting). Now let us assume that $\E T_k = \infty$ and that the process $\{S_n, n\in\Z\}$ is 
{\em identifiable} in the sense that
there exists a measurable function that maps $\{B_{T+t} - B_T, t\in \R\}$ onto $\{S_n-T, n\in\Z\}$
for an arbitrary random time $T$. 
If $X$ was 2BM, then this process would have to be stationary seen from the random time which turns $X$ into 2BM(0). But for a 
renewal process with infinite expected interarrival law there does not exist any shift which will make it stationary. 
\end{remark}

\begin{problem}
Does there exist a  decomposable FBM $X$ satisfying $\sup_k |T_k| <\infty$, a.s., which is not a BBM?
\end{problem}

\begin{problem}
Can one generalize Theorem \ref{d30.5} from parabolas to other space-time shapes?
\end{problem}

\begin{problem}
Analyze ``forward L\'evy processes''. In particular, find analogues of all theorems in this article for forward L\'evy processes.
\end{problem}

\section{Acknowledgments}

We are grateful to Leif Doering, Ander Holroyd, Olav Kallenberg, Haya Kaspi, Wilfrid Kendall, G\"unter Last, Peter M\"orters, Yuval Peres, Ed Perkins, Hermann Thorisson and Jon Wellner for
very helpful advice.

\bibliographystyle{abbrvnat}
\bibliography{ForwardBM}

\begin{thebibliography}{13}
\providecommand{\natexlab}[1]{#1}
\providecommand{\url}[1]{\texttt{#1}}
\expandafter\ifx\csname urlstyle\endcsname\relax
  \providecommand{\doi}[1]{doi: #1}\else
  \providecommand{\doi}{doi: \begingroup \urlstyle{rm}\Url}\fi

\bibitem[Bertoin and Savov(2011)]{BS}
J.~Bertoin and M.~Savov.
\newblock Some applications of duality for {L}\'evy processes in a half-line.
\newblock \emph{Bull. Lond. Math. Soc.}, 43\penalty0 (1):\penalty0 97--110,
  2011.

\bibitem[Darling(1952)]{Dar}
D.~A. Darling.
\newblock The influence of the maximum term in the addition of independent
  random variables.
\newblock \emph{Trans. Amer. Math. Soc.}, 73:\penalty0 95--107, 1952.

\bibitem[Davis(1983)]{Davis83}
B.~Davis.
\newblock On {B}rownian slow points.
\newblock \emph{Z. Wahrsch. Verw. Gebiete}, 64\penalty0 (3):\penalty0 359--367,
  1983.

\bibitem[Greenwood and Perkins(1983)]{GP83}
P.~Greenwood and E.~Perkins.
\newblock A conditioned limit theorem for random walk and {B}rownian local time
  on square root boundaries.
\newblock \emph{Ann. Probab.}, 11\penalty0 (2):\penalty0 227--261, 1983.

\bibitem[Harrison and Shepp(1981)]{HarrShepp}
J.~M. Harrison and L.~A. Shepp.
\newblock On skew {B}rownian motion.
\newblock \emph{Ann. Probab.}, 9\penalty0 (2):\penalty0 309--313, 1981.

\bibitem[Kallenberg(2002)]{Kall}
O.~Kallenberg.
\newblock \emph{Foundations of modern probability}.
\newblock Probability and its Applications (New York). Springer-Verlag, New
  York, second edition, 2002.

\bibitem[Kemeny et~al.(1976)Kemeny, Snell, and Knapp]{KSK}
J.~G. Kemeny, J.~L. Snell, and A.~W. Knapp.
\newblock \emph{Denumerable {M}arkov chains}.
\newblock Springer-Verlag, New York, second edition, 1976.
\newblock With a chapter on Markov random fields, by David Griffeath, Graduate
  Texts in Mathematics, No. 40.

\bibitem[Koralov and Sinai(2007)]{KorSin}
L.~B. Koralov and Y.~G. Sinai.
\newblock \emph{Theory of probability and random processes}.
\newblock Universitext. Springer, Berlin, second edition, 2007.

\bibitem[Lejay(2006)]{Lejay}
A.~Lejay.
\newblock On the constructions of the skew {B}rownian motion.
\newblock \emph{Probab. Surv.}, 3:\penalty0 413--466, 2006.

\bibitem[Perkins(1983)]{Perkins}
E.~Perkins.
\newblock On the {H}ausdorff dimension of the {B}rownian slow points.
\newblock \emph{Z. Wahrsch. Verw. Gebiete}, 64\penalty0 (3):\penalty0 369--399,
  1983.

\bibitem[Sznitman(2012)]{SZ}
A.-S. Sznitman.
\newblock On scaling limits and {B}rownian interlacements.
\newblock 2012.
\newblock arXiv:1209.4531v1.

\bibitem[Thorisson(2000)]{Thor}
H.~Thorisson.
\newblock \emph{Coupling, stationarity, and regeneration}.
\newblock Probability and its Applications (New York). Springer-Verlag, New
  York, 2000.

\bibitem[Williams(1974)]{Will74}
D.~Williams.
\newblock Path decomposition and continuity of local time for one-dimensional
  diffusions. {I}.
\newblock \emph{Proc. London Math. Soc. (3)}, 28:\penalty0 738--768, 1974.

\end{thebibliography}

\end{document}